\newtheorem{theorem}{Theorem}[section]
\newtheorem{lemma}[theorem]{Lemma}
\def\bull{\vrule height .9ex width .8ex depth -.1ex}
\newenvironment{proof}{\smallbreak \noindent {\bf Proof.~}}
              {\unskip\nobreak\hfill\hskip 2em \bull\par\medbreak}
\newenvironment{proofof}[1]{\medbreak\noindent{\bf Proof of~#1.~}}
              {\unskip\nobreak\hfill\hskip 2em \bull\par\medbreak}
\def\bQ{\mathbb{Q}}
\def\bR{\mathbb{R}}
\def\cG{\mathcal{G}}
\def\cL{\mathcal{L}}
\def\eps{\epsilon}
\def\la{\lambda}
\def\om{\omega}
\def\si{\sigma}
\def\SAF{\mathop{\mathrm{Inv}}}
\title{Notes on the commutator group of\\ the group of interval exchange
transformations}
\author{Yaroslav Vorobets\thanks{%
        Partially supported by the NSF grant DMS-0701298.}}
\date{}
\begin{document}

\maketitle

\begin{abstract}
We study the group of interval exchange transformations and obtain several
characterizations of its commutator group.  In particular, it turns out
that the commutator group is generated by elements of order $2$.
\end{abstract}

\section{Introduction}\label{intro}

Let $(p,q)$ be an interval of the real line and $p_0=p<p_1<\dots<p_{k-1}
<p_k=q$ be a finite collection of points that split $(p,q)$ into
subintervals $(p_{i-1},p_i)$, $i=1,2,\dots,k$.  A transformation $f$ of
the interval $(p,q)$ that rearranges the subintervals by translation is
called an {\em interval exchange transformation\/} (see Figure \ref{fig1}).
To be precise, the restriction of $f$ to any $(p_{i-1},p_i)$ is a
translation, the translated subinterval remains within $(p,q)$ and does not
overlap with the other translated subintervals.  The definition is still
incomplete as values of $f$ at the points $p_i$ are not specified.  The
standard way to do this, which we adopt, is to require that $f$ be right
continuous.  That is, we consider the half-closed interval $I=[p,q)$
partitioned into smaller half-closed intervals $I_i=[p_{i-1},p_i)$.  The
interval exchange transformation $f$ is to translate each $I_i$ so that the
images $f(I_1),f(I_2),\dots,f(I_k)$ form another partition of $I$.

Let $\la$ be a $k$-dimensional vector whose coordinates are lengths of the
intervals $I_1,I_2,\dots,I_k$.  Let $\pi$ be a permutation on
$\{1,2,\dots,k\}$ that tells how the intervals are rearranged by $f$.
Namely, $\pi(i)$ is the position of $f(I_i)$ when the the intervals
$f(I_1),\dots,f(I_k)$ are ordered from left to right.  For the example in
Figure \ref{fig1}, $\pi=(1\,2\,4\,3)$.  We refer to the pair $(\la,\pi)$ as
a combinatorial description of $f$.  Given an integer $k\ge1$, a
$k$-dimensional vector $\la$ with positive coordinates that add up to the
length of $I$, and a permutation $\pi$ on $\{1,2,\dots,k\}$, the pair
$(\la,\pi)$ determines a unique interval exchange transformation of $I$.
The converse is not true.  Any partition of $I$ into subintervals that are
translated by $f$ gives rise to a distinct combinatorial description.
Clearly, such a partition is not unique.  However there is a unique
partition with the smallest possible number of subintervals.

\begin{figure}[t]
\centerline{\includegraphics[scale=1.2]{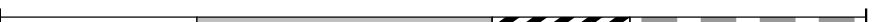}}
\caption{Interval exchange transformation.}\label{fig1}
\end{figure}

The interval exchange transformations have been popular objects of study in
ergodic theory.  First of all, the exchange of two intervals is equivalent
to a rotation of the circle (it becomes one when we identify the endpoints
of the interval $I$ thus producing a circle).  The exchanges of three or
more intervals were first considered by Katok and Stepin \cite{KS}.  The
systematic study started since the paper by Keane \cite{Keane} who coined
the term.  For an account of the results, see the survey by Viana
\cite{Viana}.

All interval exchange transformations of a fixed interval $I=[p,q)$ form a
transformation group $\cG_I$.  Changing the interval, one obtains an
isomorphic group.  Indeed, let $J=[p',q')$ be another interval and $h$ be
an affine map of $I$ onto $J$.  Then $f\in G_J$ if and only if $h^{-1}fh\in
G_I$.  We refer to any of the groups $G_I$ as the {\em group of interval
exchange transformations}.  The present notes are concerned with
group-theoretic properties of $G_I$.  An important tool here is the {\em
scissors congruence invariant\/} or the {\em Sah-Arnoux-Fathi (SAF)
invariant\/} of $f\in G_I$ introduced independently by Sah \cite{Sah} and
Arnoux and Fathi \cite{Arnoux}.  The invariant can be informally defined by
$$
\SAF(f)=\int_I 1\otimes\bigl(f(x)-x\bigr)\,dx
$$
(the integral is actually a finite sum).  The importance stems from the
fact that $\SAF$ is a homomorphism of the group $G_I$ onto
$\bR\wedge_{\bQ}\bR$.  As a consequence, the invariant vanishes on the
commutator group, which is a subgroup of $G_I$ generated by commutators
$f^{-1}g^{-1}fg$, where $f$ and $g$ run over $G_I$.

In this paper, we establish the following properties of the commutator
group of the group of interval exchange transformations $G_I$.

\begin{theorem}\label{main1}
The following four groups are the same:
\begin{itemize}
\item
the group of interval exchange transformations with zero SAF invariant,
\item
the commutator group of the group of interval exchange transformations,
\item
the group generated by interval exchange transformations of order $2$,
\item
the group generated by interval exchange transformations of finite order.
\end{itemize}
\end{theorem}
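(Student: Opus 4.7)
\medskip

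\noindent\textbf{Plan of proof.} Label the four groups, in the order listed, $H_1,H_2,H_3,H_4$. The inclusions $H_3\subseteq H_4\subseteq H_1$ and $H_2\subseteq H_1$ are essentially automatic: involutions are of finite order; for any $f$ of finite order $n$ one has $n\,\SAF(f)=\SAF(f^n)=0$ in the torsion-free $\bQ$-vector space $\bR\wedge_{\bQ}\bR$, forcing $\SAF(f)=0$; and $\SAF$ is a homomorphism into an abelian group. What is left to prove is $H_1\subseteq H_3$ and $H_3\subseteq H_2$, which together with the above close the chains $H_1\subseteq H_3\subseteq H_2\subseteq H_1$ and $H_1\subseteq H_3\subseteq H_4\subseteq H_1$ and therefore imply equality of all four groups.

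\medskip

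\noindent For $H_1\subseteq H_3$ I would proceed in two stages. \emph{Stage 1:} show that every periodic IET is a product of involutions. A finite-order $f\in G_I$ decomposes $I$ into a finite union of towers of congruent subintervals that $f$ permutes cyclically; writing each cyclic permutation as a product of transpositions, and realizing every such transposition geometrically as the swap of two congruent intervals, exhibits $f$ as a product of order-$2$ elements of $G_I$. \emph{Stage 2:} show that an arbitrary $f\in H_1$ is, modulo $H_3$, periodic. The hypothesis $\SAF(f)=\sum_i \ell_i\wedge t_i=0$ is a $\bQ$-linear relation among the lengths $\ell_i$ and translations $t_i$ defining $f$; I would use it to further refine the partition and then peel off, by multiplication with carefully chosen involutions, blocks on which the local lengths and translations become $\bQ$-proportional, so that the restriction of $f$ there is periodic. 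Iterating reduces the problem to the case already handled in Stage 1. I expect the real difficulty of the theorem to lie in this step, converting the algebraic vanishing of $\sum \ell_i\wedge t_i$ into a concrete geometric decomposition --- a scissors-congruence statement in the spirit of the original work of Sah and Arnoux--Fathi that defined $\SAF$.

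\medskip

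\noindent For $H_3\subseteq H_2$, an involution $\tau\in G_I$ is, after further cutting of its paired intervals, a product of pairwise disjoint elementary swaps of pairs of congruent subintervals; since any two elementary swaps in $G_I$ are conjugate in $G_I$ up to rescaling and $H_2$ is normal, it suffices to present one specific elementary swap $\tau_0$ as a commutator of two interval exchanges. A workable choice takes $\beta\in G_I$ to be an involution commuting with $\tau_0$ such that $\tau_0\beta$ is again an involution $G_I$-conjugate to $\beta$, and $\alpha\in G_I$ realizing that conjugation; then $\tau_0=\alpha\beta\alpha^{-1}\beta^{-1}=[\alpha,\beta]$. When $\tau_0$ swaps two adjacent congruent intervals this can be carried out explicitly by taking $\beta$ to swap quarter-length subintervals within each half, and $\alpha$ a further elementary swap witnessing the conjugacy of $\beta$ and $\tau_0\beta$.
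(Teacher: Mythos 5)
Your logical skeleton is sound and even streamlines the paper's cycle of inclusions: deducing $H_4\subseteq H_1$ directly from the fact that $\bR\wedge_{\bQ}\bR$ is a $\bQ$-vector space and hence torsion-free is a clean shortcut, and your explicit commutator presentation of an elementary swap (an involution $\beta$ swapping quarter-length pieces, a further swap $\alpha$ conjugating $\beta$ to $\tau_0\beta$) is exactly the construction of Lemma \ref{elem3}. One minor flaw in $H_3\subseteq H_2$: you invoke that ``any two elementary swaps are conjugate in $G_I$ up to rescaling.'' As a reduction inside $G_I$ this fails, since conjugation by an interval exchange preserves the measure of the support and hence the type of a swap, and rescaling is not an inner automorphism of $G_I$. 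Fortunately the reduction is unnecessary: your explicit construction works verbatim for a swap of any type and any pair of congruent nonoverlapping intervals.

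The fatal gap is Stage 2 of $H_1\subseteq H_3$, which is the entire content of the theorem and which you leave as a plan (``refine the partition and peel off blocks on which the restriction becomes periodic''). That plan does not survive scrutiny: there exist minimal interval exchange transformations with vanishing SAF invariant, and a minimal transformation is periodic on no subinterval whatsoever, so there are no periodic blocks to peel off; the choice of the ``carefully chosen involutions'' is precisely what has to be proved. The paper's route is genuinely global rather than local: (i) a lemma writes the lengths of the exchanged intervals as nonnegative integer combinations of a $\bQ$-linearly independent set $\{l_1,\dots,l_m\}$, so that $f$ factors into restricted rotations (exchanges of two adjacent intervals) whose types $(a,b)$ have $a,b\in\{l_1,\dots,l_m\}$; (ii) linear independence of the wedge products $l_i\wedge l_j$ then shows that $\SAF(f)=0$ forces this product to be \emph{balanced}, i.e.\ to contain equally many factors of type $(a,b)$ and of type $(b,a)$; (iii) a chain of lemmas --- a Euclidean-algorithm reduction of types, the fact that $f_1^{-1}f_2$ is a product of swaps when $f_1,f_2$ are restricted rotations of the same type, and the fact that the commutator of a restricted rotation with an arbitrary interval exchange is a product of swaps --- allows one to cancel the paired factors by induction. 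None of this machinery is present in, or implied by, your sketch, so the decisive inclusion $H_1\subseteq H_3$ remains unproved.
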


\begin{theorem}\label{main2}
The quotient of the group of interval exchange transformations by its
commutator group is isomorphic to $\bR\wedge_{\bQ}\bR$.
\end{theorem}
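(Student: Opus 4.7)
The plan is to deduce Theorem~\ref{main2} from Theorem~\ref{main1} via the first isomorphism theorem. The introduction records that the SAF invariant defines a surjective group homomorphism $\SAF\colon G_I\to\bR\wedge_\bQ\bR$, and Theorem~\ref{main1} identifies its kernel with the commutator subgroup $[G_I,G_I]$. Combining these two facts immediately gives the desired isomorphism $G_I/[G_I,G_I]\cong\bR\wedge_\bQ\bR$.

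Since surjectivity of $\SAF$ is asserted but not proved in the introduction, I would take a moment to verify it by exhibiting explicit preimages. For $a,b>0$, the two-interval exchange $f_{a,b}$ translates $[0,a)$ by $+b$ and $[a,a+b)$ by $-a$, so the defining ``integral'' (really a finite sum over the translated subintervals) gives
\[
\SAF(f_{a,b})=a\otimes b - b\otimes a,
\]
which descends to $2(a\wedge b)$ in $\bR\wedge_\bQ\bR$. The image of $\SAF$ is therefore a subgroup containing $2(a\wedge b)$ for every pair of positive reals; allowing rational rescalings of the lengths (and concatenations or inverses of IETs to take $\bQ$-linear combinations), this image contains the $\bQ$-span of all simple wedges of positive reals. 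Since every $x\wedge y\in\bR\wedge_\bQ\bR$ is a $\bQ$-combination of wedges of positive reals (split each factor into positive and negative parts and expand by bilinearity), the image exhausts $\bR\wedge_\bQ\bR$.

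There is no substantive obstacle in Theorem~\ref{main2}: all the depth has been absorbed into Theorem~\ref{main1}, whose nontrivial direction asserts that every SAF-trivial IET is a product of commutators. Once that is in hand, Theorem~\ref{main2} is a two-line corollary of the first isomorphism theorem, and the only point requiring any care is the routine surjectivity check sketched above.
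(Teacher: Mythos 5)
Your proof is correct and takes essentially the same route as the paper: apply the first isomorphism theorem to $\SAF$, use Theorem \ref{main1} to identify the kernel with the commutator group, and verify surjectivity onto $\bR\wedge_{\bQ}\bR$ separately (this last step is exactly the paper's Lemma \ref{inv6}). Two small points in your surjectivity check: with the paper's convention that $\bR\wedge_{\bQ}\bR$ is the \emph{subspace} of $\bR\otimes_{\bQ}\bR$ spanned by $v\wedge w:=v\otimes w-w\otimes v$, the invariant of your $f_{a,b}$ is exactly $a\wedge b$ rather than $2(a\wedge b)$; and $f_{a,b}$ exists in $\cG_I$ only when $a+b$ does not exceed the length of $I$, which is why the paper first shrinks $(a,b)$ while preserving the wedge (Lemmas \ref{inv4}--\ref{inv5}) --- your rational rescaling, e.g.\ $a\wedge b=n^2\bigl((a/n)\wedge(b/n)\bigr)$, would serve the same purpose but needs to be stated explicitly.
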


\begin{theorem}\label{main3}
The commutator group of the group of interval exchange transformations is
simple.
\end{theorem}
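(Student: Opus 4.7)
The plan is to prove that any non-trivial normal subgroup $N$ of $H := [\cG_I, \cG_I]$ equals $H$. By Theorem~\ref{main1}, $H$ is generated by the interval-exchange involutions (each lying in $H$ because finite order forces the SAF invariant to vanish), so it suffices to show that $N$ contains every involution in $\cG_I$.

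The first step is to extract one involution from $N$. Pick a non-identity $f\in N$. Since $f$ is piecewise a translation but not the identity, some sub-interval $J\subset I$ is translated non-trivially, and after shrinking $J$ I may also arrange $f(J)\cap J=\emptyset$. Split $J$ into two equal halves $J_1,J_2$ and let $\tau$ be the involution that swaps them and is the identity on $I\setminus J$; then $\tau\in H$ and $[f,\tau]\in N$ by normality. Because $f\tau f^{-1}$ is supported on $f(J)$, disjoint from $\mathrm{supp}(\tau)=J$, the two factors commute and $[f,\tau]=\tau\cdot(f\tau f^{-1})$ is itself a non-identity involution $\si_0\in N$.

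The second step is to show that $N$ contains every involution of $\cG_I$. Every involution is a disjoint product of \emph{basic swaps} (exchanges of two equal-length sub-intervals), so it suffices to produce every basic swap inside $N$. I plan to do this in two sub-steps. (a) Iterating the Step 1 construction—first inside $\si_0$ and then repeatedly, with ever smaller auxiliary intervals—yields involutions in $N$ supported on arbitrarily small regions; a further commutator between two such involutions with cleverly interlocking supports yields cyclic permutations of three equal-length intervals, and products of these with double swaps produce single basic swaps. (b) For any fixed length $\ell$, $H$ acts transitively on ordered pairs of disjoint sub-intervals of length $\ell$: indeed $\cG_I$ acts transitively on such pairs by a combinatorial choice of permutation, and any $\cG_I$-conjugator can be modified to lie in $H$ by multiplying it by an exchange supported in the complement of the two pairs and carrying a compensating SAF value, which exists because $\SAF$ remains surjective onto $\bR\wedge_\bQ\bR$ when restricted to exchanges supported in any non-degenerate sub-interval.

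The main obstacle is sub-step (a), because the direct output of the Step 1 commutator is a \emph{double} swap rather than a single basic swap; one must show that the subgroup of $H$ generated by double swaps (together with the positional flexibility provided by (b)) already exhausts the single basic swaps, presumably by expressing a basic swap as a product or commutator of appropriately positioned double swaps or three-interval cycles. A secondary technicality in (b) is ensuring that the complement of the relevant sub-intervals is large enough to host the SAF-correcting exchange, which is achieved by first arranging via (a) that the basic swaps in question are of sufficiently small length. Once these points are handled, fragmentation of an arbitrary involution into basic swaps yields $N=H$, completing the proof.
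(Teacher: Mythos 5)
Your overall strategy is the paper's: commutate a nontrivial $f\in N$ with an involution supported on a short interval $J$ that $f$ moves off itself, obtain involutions of arbitrarily small type in $N$, use conjugacy inside $[\cG,\cG]$ to reach all swaps of a given small type, and conclude by a generation statement. Step~1 is correct, and your sub-step (b) is a workable variant of the paper's Lemma~\ref{simp2} (the paper instead conjugates by an explicit order-$2$ element built from two disjoint swaps, Lemma~\ref{elem4}, which sidesteps your SAF-correction argument; both routes are fine).

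The genuine gap is sub-step (a). You stop at the double swap $\si_0=\tau\cdot(f\tau f^{-1})$ and propose an unverified detour through three-interval cycles; that detour is both unnecessary and unproved. The resolution is one more conjugation, and it is exactly what the paper's Lemma~\ref{elem7} does. Write $J=J_1\cup J_2$ (left and right halves) and $f(J)=f(J_1)\cup f(J_2)$. Your $\si_0$ acts on these four half-intervals as the permutation $(J_1\,J_2)(f(J_1)\,f(J_2))$. Let $g_2$ be the single swap interchanging $J_2$ with $f(J_1)$; it is an involution, hence lies in $H$ by Theorem~\ref{main1}, so $g_2\si_0g_2\in N$ by normality. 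This conjugate acts as $(J_1\,f(J_1))(J_2\,f(J_2))$, and since $f|_J$ is a translation these two half-swaps are by the \emph{same} translation and assemble into the single basic swap of $J$ with $f(J)$ --- a swap map of type $|J|$, with $|J|$ arbitrarily small. A second, smaller omission: steps~1--2 only place \emph{small} basic swaps into $N$, so to conclude $N=H$ you must know that $H$ is generated by swaps of type less than any prescribed $\eps$; this requires factoring a swap of type $a$ into $N$ swaps of type $a/N$ (the paper's Lemma~\ref{simp1}), a step your ``fragmentation into basic swaps'' does not by itself supply.
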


It has to be noted that most of these results are already known.  A theorem
by Sah reproduced in Veech's paper \cite{Veech} contains Theorems
\ref{main2}, \ref{main3}, and part of Theorem \ref{main1}.  Unfortunately,
the preprint of Sah \cite{Sah} was never published.  Hence we include
complete proofs.  A new result of the present paper is that the commutator
group of $\cG_I$ is generated by elements of order $2$.  This is the
central result of the paper as our proofs of the theorems are based on the
study of elements of order $2$.

The paper is organized as follows.  Section \ref{elem} contains some
elementary constructions that will be used in the proofs of the theorems.
The scissors congruence invariant is considered in Section \ref{inv}.
Section \ref{comm} is devoted to the proof of Theorem \ref{main1}.  Theorem
\ref{main2} is proved in the same section.  Section \ref{simp} is devoted
to the proof of Theorem \ref{main3}.

The author is grateful to Michael Boshernitzan for useful and inspiring
discussions.

\section{Elementary constructions}\label{elem}

Let us choose an arbitrary interval $I=[p,q)$.  In what follows, all
interval exchange transformations are assumed to be defined on $I$.  Also,
all subintervals of $I$ are assumed to be half-closed intervals of the form
$[x,y)$.

The proofs of Theorems \ref{main1} and \ref{main3} are based on several
elementary constructions described in this section.  First of all, we
introduce two basic types of transformations used in those constructions.
An {\em interval swap map of type $a$} is an interval exchange
transformation that interchanges two nonoverlapping intervals of length $a$
by translation while fixing the rest of the interval $I$.  A {\em
restricted rotation of type $(a,b)$} is an interval exchange transformation
that exchanges two neighboring intervals of lengths $a$ and $b$ (the
interval of length $a$ must be to the left of the interval of length $b$)
while fixing the rest of $I$.  The type of an interval swap map is
determined uniquely, and so is the type of a restricted rotation.  Clearly,
any interval swap map is an involution.  The inverse of a restricted
rotation of type $(a,b)$ is a restricted rotation of type $(b,a)$.  Any
restricted rotation of type $(a,a)$ is also an interval swap map of type
$a$.

\begin{figure}[t]
\centerline{\includegraphics[scale=1]{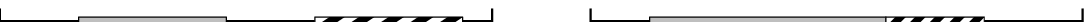}}
\caption{An interval swap map and a restricted rotation.}\label{fig2}
\end{figure}

\begin{lemma}\label{elem1}
Any interval exchange transformation $f$ is a product of restricted
rotations.  Moreover, if $f$ exchanges at least two intervals and $\cL$ is
the set of their lengths, it is enough to use restricted rotations of types
$(a,b)$ such that $a,b\in\cL$.
\end{lemma}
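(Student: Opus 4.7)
My strategy is a bubble sort on any combinatorial description $(\la,\pi)$ of $f$: I will left-multiply $f$ by a sequence of restricted rotations to progressively reduce $\pi$ to the identity, at which point $f$ itself becomes the identity of $I$. Choose a combinatorial description whose length vector $(\la_1,\dots,\la_k)$ has $\{\la_1,\dots,\la_k\}\subseteq\cL$; the minimum partition works and in fact realizes equality.

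The induction is driven by the inversion count $N(\pi)=|\{(i_1,i_2):i_1<i_2,\ \pi(i_1)>\pi(i_2)\}|$. If $N(\pi)=0$ then $\pi$ is the identity, so $f=\mathrm{id}_I$ is the empty product. Otherwise pick $j$ with $\pi^{-1}(j)>\pi^{-1}(j+1)$; in the image partition of $f$, positions $j$ and $j+1$ are then occupied by the intervals $I_{\pi^{-1}(j)},I_{\pi^{-1}(j+1)}$ of respective lengths $a=\la_{\pi^{-1}(j)}$ and $b=\la_{\pi^{-1}(j+1)}$, both in $\cL$. Let $r$ be the restricted rotation of type $(a,b)$ that swaps this adjacent pair in situ. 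Then $r\circ f$ admits the combinatorial description $(\la,(j\,j+1)\circ\pi)$, whose permutation has exactly $N(\pi)-1$ inversions. Iterating $N(\pi)$ times produces restricted rotations $r_1,\dots,r_{N(\pi)}$ with $r_{N(\pi)}\circ\cdots\circ r_1\circ f=\mathrm{id}_I$, so $f=r_1^{-1}\circ\cdots\circ r_{N(\pi)}^{-1}$ is a product of restricted rotations (the inverse of a restricted rotation of type $(a,b)$ being one of type $(b,a)$).

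I do not foresee a genuine obstacle: the argument is essentially standard bubble sort in the symmetric group, with the geometric content reducing to the observation that an adjacent-position swap in the image of $f$ is exactly a restricted rotation of the appropriate type. The only point to double-check is that all types used stay in $\cL\times\cL$, which is automatic because at every step the image partition consists of the original intervals $I_1,\dots,I_k$ merely reordered, so only the lengths $\la_1,\dots,\la_k\in\cL$ are ever touched.
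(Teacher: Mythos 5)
Your proof is correct and follows essentially the same route as the paper: both arguments reduce the permutation $\pi$ to the identity via adjacent transpositions, each realized geometrically as a restricted rotation whose type lengths are coordinates of $\la$ and hence lie in $\cL$. The only difference is bookkeeping---you peel off rotations greedily by inversion count, while the paper first factors $\pi$ into adjacent transpositions through its cycle decomposition and then realizes each factor---and your handling of the identity case by an empty product (versus the paper's $hh^{-1}$) is a harmless stylistic choice.
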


\begin{proof}
The exchange of one interval is the identity.  In this case, take any
restricted rotation $h$.  Then $f=hh^{-1}$, which is a product of
restricted rotations.  Now assume that $f$ exchanges $k\ge2$ intervals.
Let $I_1,I_2,\dots,I_k$ be the intervals, ordered from left to right, and
$(\la,\pi)$ be the corresponding combinatorial description of $f$.  Then
$\cL$ is the set of coordinates of the vector $\la$.

For any permutation $\si$ on $\{1,2,\dots,k\}$, let $f_\si$ denote a unique
interval exchange transformation with the combinatorial description
$(\la,\si)$.  Given two permutations $\si$ and $\tau$ on $\{1,2,\dots,k\}$,
let $g_{\si,\tau}=f_{\si}f_\tau^{-1}$.  For any $i$, $1\le i\le k$, the
transformation $g_{\si,\tau}$ translates the interval $f_\tau(I_i)$ onto
$f_\si(I_i)$.  It follows that $g_{\si,\tau}$ has combinatorial
description $(\la',\si\tau^{-1})$, where $\la'_i=\la_{\tau^{-1}(i)}$ for
$1\le i\le k$.  Now suppose that $\pi$ is expanded into a product of
permutations $\pi=\si_1\si_2\dots\si_m$.  For any $j$, $1\le j\le m$, let
$\pi_j=\si_j\si_{j+1}\dots\si_m$.  Then $f=h_1h_2\dots h_m$, where
$h_m=f_{\si_m}$ and $h_j=g_{\pi_j,\pi_{j+1}}$ for $1\le j<m$.  By the above
each $h_j$ has combinatorial description $(\la^{(j)},\si_j)$, where the
vector $\la^{(j)}$ is obtained from $\la$ by permutation of its
coordinates.  In the case $\si_j$ is a transposition of neighboring
numbers, $\si_j=(i,\,i+1)$, the transformation $h_j$ is a restricted
rotation of type $\bigl(\la^{(j)}_i,\la^{(j)}_{i+1}\bigr)$.  Notice that
$\la^{(j)}_i,\la^{(j)}_{i+1}\in\cL$.

It remains to observe that any permutation $\pi$ on $\{1,2,\dots,k\}$ is a
product of transpositions of neighboring numbers.  Indeed, we can represent
$\pi$ as a product of cycles.  Further, any cycle $(n_1\,n_2\,\dots\,n_m)$
of length $m\ge2$ is a product of $m-1$ transpositions:
$(n_1\,n_2\,\dots\,n_m)=(n_1\,n_2)(n_2\,n_3)\dots(n_{m-1}\,n_m)$.  A cycle
of length $1$ is the identity, hence it equals $(1\,2)(1\,2)$.  Finally,
any transposition $(n\,l)$, $n<l$, is expanded into a product of
transpositions of neighboring numbers: $(n\,l)=\tau_n\tau_{n+1}\dots
\tau_{l-2}\tau_{l-1}\tau_{l-2}\dots\tau_{n+1}\tau_n$, where $\tau_i
=(i,\,i+1)$.
\end{proof}

Notice that the set $\cL$ in Lemma \ref{elem1} depends on the combinatorial
description of the interval exchange transformation $f$.  The lemma holds
for every version of this set.

\begin{lemma}\label{elem2}
Any interval exchange transformation of finite order is a product of
interval swap maps.
\end{lemma}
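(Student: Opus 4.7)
My plan is to refine the partition underlying $f$ to one that $f$ itself permutes, to observe that each cycle of the induced permutation consists of intervals of a single common length, and then to realize each cycle as a product of interval swap maps.

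Let $n$ be the order of $f$ and let $\mathcal{P}$ be any partition coming from a combinatorial description of $f$. For each $j$, the image $\mathcal{P}_j = f^j(\mathcal{P})$ is again a partition of $I$ (into rearranged copies of the same intervals), and $\mathcal{P}_n = \mathcal{P}_0$. I would take $\tilde{\mathcal{P}}$ to be the common refinement of $\mathcal{P}_0, \mathcal{P}_1, \ldots, \mathcal{P}_{n-1}$. Since the list $f(\mathcal{P}_0), \ldots, f(\mathcal{P}_{n-1})$ is a cyclic shift of the original list, $\tilde{\mathcal{P}}$ is $f$-invariant; hence $f$ permutes the atoms $J_1, \ldots, J_m$ of $\tilde{\mathcal{P}}$ via some permutation $\sigma$ of $\{1,\ldots,m\}$. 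Within any cycle of $\sigma$ the intervals are successive $f$-translates of one another, so they share a common length.

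For each cycle $(j_1\,j_2\,\ldots\,j_r)$ of $\sigma$, use the factorization $(j_1\,j_2\,\ldots\,j_r) = (j_1\,j_2)(j_2\,j_3)\cdots(j_{r-1}\,j_r)$ into adjacent transpositions. Because $J_{j_s}$ and $J_{j_{s+1}}$ have the same length, the transposition $(j_s\,j_{s+1})$ is realized by an interval swap map $h_s$ that exchanges these two intervals and fixes the rest of $I$. A direct check shows that $h_1 h_2 \cdots h_{r-1}$ sends $J_{j_1}\mapsto J_{j_2}\mapsto\cdots\mapsto J_{j_r}\mapsto J_{j_1}$ and fixes every other interval, which matches $f$ on this cycle. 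Interval swap maps with disjoint supports commute, so multiplying over all cycles of $\sigma$ (fixed points of $\sigma$ contributing nothing) expresses $f$ as a product of interval swap maps; the degenerate case $f = \mathrm{id}$ can be written as $hh$ for any interval swap map $h$.

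The main obstacle I anticipate is the verification that the refined partition $\tilde{\mathcal{P}}$ is genuinely $f$-invariant (so that $\sigma$ is defined, and cycles consist of equal-length intervals). Once that is in place, the argument reduces to the familiar factorization of a permutation as a product of adjacent transpositions in the symmetric group, transported to the group of interval exchange transformations via the equal-length matching within each cycle.
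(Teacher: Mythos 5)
Your overall strategy---refine to an $f$-invariant partition of $I$ into intervals, observe that the intervals in one cycle of the induced permutation share a common length, and realize each cycle as a product of adjacent interval swaps---is exactly the strategy of the paper's proof, which obtains the invariant partition by cutting $I$ at all discontinuity points of all powers of $f$. Your cycle-to-transpositions step and the commutation of swaps with disjoint supports are fine.

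The gap is in your construction of $\tilde{\mathcal{P}}$. The assertion that $\mathcal{P}_j=f^j(\mathcal{P})$ is ``a partition of $I$ into rearranged copies of the same intervals'' is false for $j\ge 2$: an atom $A$ of $\mathcal{P}$ is translated by $f$, but $f$ need not be continuous on $f(A)$, so $f^2(A)$ can be disconnected. For instance, let $f$ permute the six unit intervals $J_i=[i-1,i)$ of $[0,6)$ by $J_1\to J_3$, $J_2\to J_4$, $J_3\to J_6$, $J_4\to J_1$, $J_5\to J_5$, $J_6\to J_2$ (an order-$5$ transformation); its coarsest combinatorial description has the atom $A=[0,2)$, and $f^2(A)=[5,6)\cup[0,1)$ is not an interval. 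Consequently your cyclic-shift argument only shows that $\tilde{\mathcal{P}}$ is an $f$-invariant partition into \emph{sets}, and you still owe the proof that its atoms are intervals---precisely the point that makes your maps $h_s$ genuine interval swap maps. This is repairable: either show inductively that each atom of $\tilde{\mathcal{P}}$, i.e.\ each set of the form $\{x: f^j(x)\in A_{i_j},\ 0\le j\le n-1\}$, is an interval (at each step one intersects an interval on which $f^j$ acts as a translation with the preimage under that translation of an interval), or follow the paper: cut $I$ at the discontinuity points of all powers of $f$, so that $f$ maps each atom into an atom, and apply the same reasoning to $f^{-1}$ to conclude that atoms map onto atoms.
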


\begin{proof}
Suppose $J_1,J_2,\dots,J_k$ are nonoverlapping intervals of the same length
$a$ contained in an interval $I$.  Let $g$ be an interval exchange
transformation of $I$ that translates $J_i$ onto $J_{i+1}$ for $1\le i<k$,
translates $J_k$ onto $J_1$, and fixes the rest of $I$.  If $k\ge2$ then
$g$ is the product of $k-1$ interval swap maps of type $a$.  Namely,
$g=h_1h_2\dots h_{k-1}$, where each $h_i$ interchanges $J_i$ with $J_{i+1}$
by translation while fixing the rest of $I$.  In the case $k=1$, $g$ is the
identity.  Then $g=h_0h_0$ for any interval swap map $h_0$ on $I$.

Let $f$ be an interval exchange transformation of $I$ that has finite
order.  Since there are only finitely many distinct powers of $f$, there
are only finitely many points in $I$ at which one of the powers has a
discontinuity.  Let $I=I_1\cup I_2\cup\ldots\cup I_m$ be a partition of $I$
into subintervals created by all such points.  By construction, the
restriction of $f$ to any $I_i$ is a translation and, moreover, the
translated interval $f(I_i)$ is contained in another element of the
partition.  Since the same applies to the inverse $f^{-1}$, it follows that
$f(I_i)$ actually coincides with some element of the partition.  Hence $f$
permutes the intervals $I_1,I_2,\dots,I_m$ by translation.  Therefore these
intervals can be relabeled as $J_{ij}$, $1\le i\le l$, $1\le j\le k_i$ ($l$
and $k_1,\dots,k_l$ are some positive integers), so that $f$ translates
each $J_{ij}$ onto $J_{i,j+1}$ if $j<k_i$ and onto $J_{i1}$ if $j=k_i$.
For any $i\in\{1,2,\dots,l\}$ let $g_i$ be an interval exchange
transformation that coincides with $f$ on the union of intervals $J_{ij}$,
$1\le j\le k_i$, and fixes the rest of $I$.  It is easy to observe that the
transformations $g_1,\dots,g_l$ commute and $f=g_1g_2\dots g_l$.  By the
above each $g_i$ can be represented as a product of interval swap maps.
Hence $f$ is a product of interval swap maps as well.
\end{proof}

\begin{lemma}\label{elem3}
Any interval swap map is a commutator of two interval exchange
transformations of order $2$.
\end{lemma}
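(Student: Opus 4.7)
The plan is to realize an interval swap map $\varphi$ of type $a$, interchanging two disjoint subintervals $A,B\subset I$, as a commutator $[\alpha,\beta]=\alpha^{-1}\beta^{-1}\alpha\beta$ of two involutions. Because for involutions one has $\alpha^{-1}=\alpha$ and $\beta^{-1}=\beta$, this commutator collapses to $(\alpha\beta)^2$, so the real task is to build IET involutions $\alpha,\beta\in\cG_I$ whose product has order $4$ and squares to $\varphi$.

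Split each of $A$ and $B$ into its left and right halves, $A=A_1\cup A_2$ and $B=B_1\cup B_2$, each half of length $a/2$. I take $\alpha$ to be the interval swap of type $a/2$ that interchanges $A_1$ and $B_1$ while fixing $A_2$, $B_2$, and the rest of $I$; and $\beta$ to be the composition of the two commuting interval swaps of type $a/2$ that interchange $A_1\leftrightarrow A_2$ and $B_1\leftrightarrow B_2$, fixing everything else. Both $\alpha$ and $\beta$ are involutions in $\cG_I$ by construction.

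A direct computation then shows that $\alpha\beta$ cyclically permutes the four half-intervals by translation according to the pattern $A_1\to A_2\to B_1\to B_2\to A_1$, so its square sends $A_1$ to $B_1$, $A_2$ to $B_2$, $B_1$ to $A_1$, and $B_2$ to $A_2$, while fixing $I\setminus(A\cup B)$. The two translation vectors that arise on $A_1$ and $A_2$ agree and both equal the horizontal offset from $A$ to $B$, so $(\alpha\beta)^2$ coincides with $\varphi$ and hence $\varphi=[\alpha,\beta]$.

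The one point requiring care is making sure that the internal $\pm a/2$ shifts introduced by $\beta$ cancel in just the right way so that $A_1$ and $A_2$ end up translated onto $B_1$ and $B_2$ by the same vector, matching the honest translation $\varphi$ rather than some piecewise variant. This is the only place where the argument could slip, but it is routine orbit bookkeeping and presents no genuine obstacle.
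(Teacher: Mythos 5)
Your construction is exactly the paper's: your $\alpha$ is its swap $g_3$ of the two left halves, your $\beta$ is its commuting product $g_1g_2$ of the internal half-swaps, and both arguments verify that $\alpha\beta$ is the same $4$-cycle $A_1\to A_2\to B_1\to B_2\to A_1$ whose square is the given swap, giving $\varphi=(\alpha\beta)^2=[\alpha,\beta]$. The proof is correct and essentially identical to the one in the paper.
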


\begin{proof}
Let $f$ be an interval swap map of type $a$.  Let $I_1=[x,x+a)$ and
$I_2=[y,y+a)$ be nonoverlapping intervals interchanged by $f$.  We split
the interval $I_1$ into two subintervals $I_{11}=[x,x+a/2)$ and
$I_{12}=[x+a/2,x+a)$.  Similarly, $I_2$ is divided into $I_{21}=[y,y+a/2)$
and $I_{22}=[y+a/2,y+a)$.  Now we introduce three interval swap maps of
type $a/2$: $g_1$ interchanges $I_{11}$ with $I_{12}$, $g_2$ interchanges
$I_{21}$ with $I_{22}$, and $g_3$ interchanges $I_{11}$ with $I_{21}$.  The
maps $f,g_1,g_2,g_3$ permute the intervals $I_{11},I_{12},I_{21},I_{22}$ by
translation and fix the rest of the interval $I$.  It is easy to see that
$g_1g_2=g_2g_1$.  Hence $g=g_1g_2$ is an element of order $2$.  Further, we
check that $g_3g=g_3g_2g_1$ maps $I_{11}$ onto $I_{12}$, $I_{12}$ onto
$I_{21}$, $I_{21}$ onto $I_{22}$, and $I_{22}$ onto $I_{11}$.  Therefore
the second iteration $(g_3g)^2$ interchanges $I_{11}$ with $I_{21}$ and
$I_{12}$ with $I_{22}$, which is exactly how $f$ acts.  Thus $f=(g_3g)^2
=g_3^{-1}g^{-1}g_3g$.
\end{proof}

For the next two constructions, we need another definition.  The {\em
support\/} of an interval exchange transformation $f$ is the set of all
points in $I$ moved by $f$.  It is the union of a finite number of
(half-closed) intervals.  For instance, the support of a restricted
rotation of type $(a,b)$ is a single interval of length $a+b$.  The support
of an interval swap map of type $a$ is the union of two nonoverlapping
intervals of length $a$.  Note that any interval swap map is uniquely
determined by its type and support.  The same holds true for any restricted
rotation.

\begin{lemma}\label{elem4}
Let $f_1$ and $f_2$ be interval swap maps of the same type.  If the
supports of $f_1$ and $f_2$ do not overlap then there exists an interval
exchange transformation $g$ of order $2$ such that $f_2=gf_1g$.
\end{lemma}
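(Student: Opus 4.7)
The plan is to exhibit a suitable $g$ directly rather than construct it abstractly. Write the support of $f_1$ as the disjoint union $A_1\cup A_2$, where $A_1$ and $A_2$ are the two length-$a$ intervals interchanged by $f_1$, and similarly write the support of $f_2$ as $B_1\cup B_2$. The non-overlap hypothesis makes the four half-closed intervals $A_1,A_2,B_1,B_2$ pairwise disjoint. I would define $g$ to be the interval exchange transformation that translates $A_i$ onto $B_i$ and $B_i$ onto $A_i$ for $i=1,2$, and fixes the complement in $I$. This is a legitimate interval exchange transformation, and it is an involution because it is the commuting product of the swap map $A_1\leftrightarrow B_1$ and the swap map $A_2\leftrightarrow B_2$ (two involutions with disjoint supports commute, and each one is trivially of order $2$).

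Next I would verify $gf_1g=f_2$ by a short case analysis on the partition of $I$ into $A_1,A_2,B_1,B_2$ and the common fixed region $R$. On $R$ all three factors act as the identity, and so does $f_2$. On each $A_i$ the map $g$ sends the point into $B_i$, which lies outside the support of $f_1$, so $f_1$ fixes it and the second $g$ brings it back; this matches $f_2$, which fixes $A_i$ because $A_i$ lies outside its support. The only real calculation is on $B_1$ (and by symmetry $B_2$): one chases a point through $g$ (translation by $a_1-b_1$ into $A_1$), then $f_1$ (translation by $a_2-a_1$ into $A_2$), then $g$ (translation by $b_2-a_2$ into $B_2$), and observes that the three translations telescope to $b_2-b_1$, which is exactly the translation effected by $f_2$ on $B_1$. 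The only decision in the construction is the pairing $A_i\leftrightarrow B_i$, and the main (minor) obstacle is choosing this pairing consistently so that the translations telescope---any consistent pairing works, so the lemma really reduces to a bookkeeping exercise.
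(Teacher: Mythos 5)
Your construction is exactly the paper's: the paper also takes $g$ to be the product of the two commuting swap maps $A_1\leftrightarrow B_1$ and $A_2\leftrightarrow B_2$ (in its notation, $g_1$ interchanging $I_1$ with $I_2$ and $g_2$ interchanging $I'_1$ with $I'_2$), notes that disjoint supports make $g$ an involution, and checks $f_2=gf_1g$ on the four intervals. Your verification by chasing translations is correct, so the proof is complete and matches the paper's.
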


\begin{proof}
Let $a$ be the type of $f_1$ and $f_2$.  Let $I_1$ and $I'_1$ be
nonoverlapping intervals of length $a$ interchanged by $f_1$.  Let $I_2$
and $I'_2$ be nonoverlapping intervals of length $a$ interchanged by $f_2$.
Assume that the supports of $f_1$ and $f_2$ do not overlap, i.e., the
intervals $I_1,I'_1,I_2,I'_2$ do not overlap with each other.  Let us
introduce two more interval swap maps of type $a$: $g_1$ interchanges $I_1$
with $I_2$ and $g_2$ interchanges $I'_1$ with $I'_2$.  Since the supports
of $g_1$ and $g_2$ do not overlap, the transformations commute.  Hence the
product $g=g_1g_2$ is an element of order $2$.  The maps $f_1$, $f_2$, and
$g$ permute the intervals $I_1,I'_1,I_2,I'_2$ by translation and fix the
rest of the interval $I$.  One easily checks that $f_2=gf_1g$.
\end{proof}

\begin{lemma}\label{elem5}
Let $f_1$ and $f_2$ be restricted rotations of the same type.  If the
supports of $f_1$ and $f_2$ do not overlap then $f_1^{-1}f_2$ is the
product of three interval swap maps.
\end{lemma}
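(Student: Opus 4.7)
My approach is to introduce the interval swap map $\tau$ of type $a+b$ that interchanges the supports $S_1$ and $S_2$ of $f_1$ and $f_2$ by translation, and to exploit the fact that $\tau$ conjugates $f_1$ to $f_2$. Writing $S_i=[x_i,x_i+a+b)$, one checks directly that $\tau f_1\tau=f_2$: on $S_1$ the composition is the identity (since $\tau(S_1)=S_2$ is fixed by $f_1$); on $S_2$ it is the translate by $x_2-x_1$ of the action of $f_1$ on $S_1$ and hence equals the restricted rotation of type $(a,b)$ supported on $S_2$; and it is the identity outside $S_1\cup S_2$. Therefore
$$
f_1^{-1}f_2=f_1^{-1}\tau f_1\tau=\tilde\tau\,\tau,\qquad\tilde\tau:=f_1^{-1}\tau f_1,
$$
and it suffices to show that $\tilde\tau$ is a product of two interval swap maps.

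Being a conjugate of the involution $\tau$, the map $\tilde\tau$ is itself an involution that interchanges $S_1$ with $S_2$. On $S_1$ a short computation gives $\tilde\tau(y)=f_1(y)+(x_2-x_1)$, since $f_1(y)\in S_1$ is carried by $\tau$ into $S_2$, which is then fixed by $f_1^{-1}$. Because the restricted rotation $f_1$ sends $[x_1,x_1+a)$ onto $[x_1+b,x_1+a+b)$ and sends $[x_1+a,x_1+a+b)$ onto $[x_1,x_1+b)$, composing with the shift by $x_2-x_1$ shows that $\tilde\tau$ translates $[x_1,x_1+a)$ onto $[x_2+b,x_2+a+b)$ and translates $[x_1+a,x_1+a+b)$ onto $[x_2,x_2+b)$. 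The involution identity $\tilde\tau^2=\mathrm{id}$ then yields the reverse translations on $S_2$.

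Let $s_1'$ be the interval swap map of type $a$ interchanging $[x_1,x_1+a)$ with $[x_2+b,x_2+a+b)$, and $s_2'$ the interval swap map of type $b$ interchanging $[x_1+a,x_1+a+b)$ with $[x_2,x_2+b)$. The four intervals involved are pairwise disjoint, so $s_1'$ and $s_2'$ commute and their product acts on each of the four pieces in precisely the way $\tilde\tau$ does. Hence $\tilde\tau=s_1's_2'$ and, combining with the first step, $f_1^{-1}f_2=s_1's_2'\tau$, the desired product of three interval swap maps.

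The conceptual step is the identity $\tau f_1\tau=f_2$, which reduces the problem to factoring the single conjugate $\tilde\tau$; once this is available, the remaining identification $\tilde\tau=s_1's_2'$ is mostly direct on $S_1$. The one delicate point is the $S_2$-side: computing $\tilde\tau$ there straight from $f_1^{-1}\tau f_1$ would force a case split because the natural splitting points of $f_1$ and $f_1^{-1}$ do not coincide when $a\ne b$, so pullbacks under $\tau$ of subintervals of $S_2$ may straddle them. Invoking $\tilde\tau^2=\mathrm{id}$ cleanly circumvents this casework.
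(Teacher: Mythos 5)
Your proof is correct, and it arrives at essentially the paper's factorization: your $\tau$ is the paper's $g_3$, your $s_1'$ and $s_2'$ are the conjugates $g_3g_1g_3$ and $g_3g_2g_3$ of the paper's swaps of types $a$ and $b$, and $s_1's_2'\tau=g_3g_1g_2$ is exactly the identity $f_1^{-1}f_2=g_3g_1g_2=g_3g_2g_1$ that the paper verifies. The only difference is organizational: you derive the factorization from the conjugation identity $\tau f_1\tau=f_2$ together with the involution property of $f_1^{-1}\tau f_1$, whereas the paper simply exhibits the three swap maps and checks the product directly.
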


\begin{proof}
Let $(a,b)$ be the type of $f_1$ and $f_2$.  Let $I_1=[x,x+a+b)$ be the
support of $f_1$ and $I_2=[y,y+a+b)$ be the support of $f_2$.  The
transformation $f_2$ translates the interval $I_{21}=[y,y+a)$ by $b$ and
the interval $I_{22}=[y+a,y+a+b)$ by $-a$.  The inverse $f_1^{-1}$ is a
restricted rotation of type $(b,a)$ with the same support as $f_1$.  It
translates the interval $I_{11}=[x,x+b)$ by $a$ and the interval
$I_{12}=[x+b,x+a+b)$ by $-b$.

Assume that the supports $I_1$ and $I_2$ do not overlap.  Let $g_1$ be the
interval swap map of type $a$ that interchanges the intervals $I_{12}$ and
$I_{21}$, let $g_2$ be the interval swap map of type $b$ that interchanges
$I_{11}$ and $I_{22}$, and let $g_3$ be the interval swap map of type $a+b$
that interchanges $I_1$ and $I_2$.  It is easy to check that
$f_1^{-1}f_2=g_3g_2g_1=g_3g_1g_2$ (see Figure \ref{fig3}).
\end{proof}

\begin{lemma}\label{elem6}
Suppose $f$ is a restricted rotation of type $(a,b)$, where $a>b$.  Then
there exist interval swap maps $g_1$ and $g_2$ such that $g_1f=fg_2$ is a
restricted rotation of type $(a-b,b)$.
\end{lemma}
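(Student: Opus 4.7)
\smallbreak\noindent\textbf{Proof plan for Lemma \ref{elem6}.~}
The plan is to work explicitly with coordinates on the support of $f$, exhibit the intended image $h$ as a restricted rotation of type $(a-b,b)$ sitting inside that support, and then read off $g_1$ and $g_2$ from the equations $g_1=hf^{-1}$ and $g_2=f^{-1}h$.

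Say the support of $f$ is $[x,x+a+b)$, so $f$ translates the subinterval $[x,x+a)$ rightward by $b$ and the subinterval $[x+a,x+a+b)$ leftward by $a$. I will take $h$ to be the restricted rotation of type $(a-b,b)$ whose support is the left portion $[x,x+a)$ of the support of $f$; thus $h$ translates $[x,x+a-b)$ rightward by $b$ and $[x+a-b,x+a)$ leftward by $a-b$, and is the identity on $[x+a,x+a+b)$ and outside of $[x,x+a+b)$.

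To find $g_1$ I track where $hf^{-1}$ sends each piece of the natural partition of $[x,x+a+b)$. The point of choosing this particular $h$ is that on the middle piece $[x+b,x+a)$ (which has length $a-b$) the two maps $f$ and $h$ coincide with the same translation, so $hf^{-1}$ is the identity there. The piece $[x,x+b)$ first goes under $f^{-1}$ to $[x+a,x+a+b)$ where $h$ is the identity, and the piece $[x+a,x+a+b)$ goes under $f^{-1}$ to $[x+a-b,x+a)$ and thence under $h$ back to $[x,x+b)$. Thus $g_1=hf^{-1}$ interchanges $[x,x+b)$ with $[x+a,x+a+b)$ and fixes everything else, so $g_1$ is an interval swap map of type $b$. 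A symmetric computation for $f^{-1}h$ shows that on $[x,x+a-b)$ the composition is the identity (again because $f$ and $h$ agree there), while the pieces $[x+a-b,x+a)$ and $[x+a,x+a+b)$ are swapped; hence $g_2$ is also an interval swap map of type $b$.

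I do not expect a real obstacle: once the correct $h$ is chosen, the verification is a short case check on at most three subintervals. The only thing that required a moment of thought was identifying a choice of $h$ that overlaps with $f$ on as large a subinterval as possible (the middle piece $[x+b,x+a)$), so that $g_1$ and $g_2$ come out supported on just two short intervals of equal length~$b$ and are automatically interval swap maps rather than something more complicated.
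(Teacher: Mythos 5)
Your proposal is correct and matches the paper's proof essentially exactly: the same intermediate rotation $h$ of type $(a-b,b)$ supported on $[x,x+a)$, and the same swap maps $g_1$ (interchanging $[x,x+b)$ with $[x+a,x+a+b)$) and $g_2$ (interchanging $[x+a-b,x+a)$ with $[x+a,x+a+b)$). The only cosmetic difference is that the paper defines $g_1,g_2$ first and verifies $g_1h=f=hg_2$, whereas you compute $g_1=hf^{-1}$ and $g_2=f^{-1}h$ directly; the computations are identical.
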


\begin{figure}[t]
\centerline{\includegraphics[scale=1]{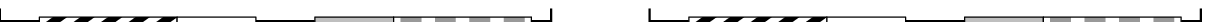}}
\caption{Proof of Lemma \ref{elem5}.}\label{fig3}
\end{figure}

\begin{proof}
Let $I_0=[x,x+a+b)$ be the support of $f$.  We define three more
transformations with supports inside $I_0$: $g_1$ is an interval swap map
of type $b$ that interchanges the intervals $[x,x+b)$ and $[x+a,x+a+b)$,
$g_2$ is an interval swap map of type $b$ that interchanges $[x+a-b,x+a)$
and $[x+a,x+a+b)$, and $h$ is a restricted rotation of type $(a-b,b)$ with
support $[x,x+a)$.  Let us check that $g_1h=f$.  Since $a>b$, the points
$x+a-b$ and $x+a$ divide $I_0$ into three subintervals $I_1=[x,x+a-b)$,
$I_2=[x+a-b,x+a)$, and $I_3=[x+a,x+a+b)$.  The map $h$ translates $I_1$ by
$b$, $I_2$ by $b-a$, and fixes $I_3$.  Then the map $g_1$ translates $I_3$
by $-a$, $[x,x+b)=h(I_2)$ by $a$, and fixes $[x+b,x+a)=h(I_1)$.  Therefore
the product $g_1h$ translates $I_1$ by $b$, $I_2$ by $b$, and $I_3$ by
$-a$.  This is exactly how $f$ acts.  Similarly, we check that $f=hg_2$.
It remains to notice that $g_1f=g_1^2h=h=hg_2^2=fg_2$.
\end{proof}

\begin{lemma}\label{elem7}
Let $f$ be a nontrivial interval exchange transformation.  Then there exist
$\eps_0>0$ and, for any $0<\eps<\eps_0$, interval swap maps $g_1,g_2$ such
that $g_2f^{-1}g_1fg_1g_2$ is an interval swap map of type $\eps$.
\end{lemma}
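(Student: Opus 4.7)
The plan is to exploit the non-triviality of $f$ to locate a clean translation zone and then produce the desired small swap by a double-swap-and-merge trick.

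Since $f$ is a piecewise translation and not the identity, there exists an interval $J \subset I$ on which $f$ acts as translation by some $t \neq 0$. By shrinking $J$ to a sufficiently small subinterval I can arrange both that $J \cap f(J) = \emptyset$ and that $J$ lies inside a single cell of the partition of $I$ formed by the images of $f$'s source cells; let $\eps_0$ be the length of the resulting $J$. For each $\eps \in (0, \eps_0)$ I pick $A = [x_0, x_0 + \eps) \subset J$, split it into its two adjacent halves $A_1, A_2$ of length $\eps/2$, and set $B = f^{-1}(A)$. The choice of $\eps_0$ forces $B$ to be a single interval of length $\eps$ lying outside $J$ (hence disjoint from $A$); its adjacent halves are $B_i = f^{-1}(A_i)$.

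Let $g_1$ be the interval swap map of type $\eps/2$ interchanging $A_1$ with $A_2$, and set $H = f^{-1} g_1 f g_1$, so that the expression in question equals $g_2 H g_2$. A direct trace shows: on $A_1 \cup A_2$, both $f$ and $f^{-1}$ act as single translations (the latter because $A$ is in one image cell), the middle translations cancel, and the four factors reduce to $g_1$, giving the swap $A_1 \leftrightarrow A_2$; on $B_1 \cup B_2$, the first $g_1$ is the identity, $f$ translates into $A_1 \cup A_2$, the second $g_1$ swaps halves, and $f^{-1}$ translates back to the partner's preimage, yielding the swap $B_1 \leftrightarrow B_2$; everywhere else the four factors telescope to the identity. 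Thus $H$ equals the commuting product $S_1 S_2$ of two disjoint swaps $S_1 = (A_1 \leftrightarrow A_2)$ and $S_2 = (B_1 \leftrightarrow B_2)$.

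Now let $g_2$ be the interval swap map of type $\eps/2$ interchanging the middle pair $A_2$ and $B_1$. Since $g_2 = g_2^{-1}$, I compute $g_2 H g_2 = (g_2 S_1 g_2)(g_2 S_2 g_2) = (A_1 \leftrightarrow B_1)(A_2 \leftrightarrow B_2)$. Writing $B = A - s$ for the translation $s$ by which $f$ maps $B$ to $A$, the two translation amounts $B_1 - A_1$ and $B_2 - A_2$ both equal $-s$; consequently the two commuting half-swaps fit together into a single translation swap of $A = A_1 \cup A_2$ with $B = B_1 \cup B_2$, which is an interval swap map of type $\eps$, as required.

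The main obstacle will be arranging $J$ and $\eps_0$ carefully so that on each of the four halves $A_1, A_2, B_1, B_2$ the maps $f$ and $f^{-1}$ really are single translations and $A, B$ are disjoint---so that $H$ is exactly the clean double swap described, with no side effects elsewhere---and then checking that conjugation by the middle swap $g_2$ realigns the two half-swaps so that their common translation combines them into the single full-length swap of type $\eps$.
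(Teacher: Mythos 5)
Your proof is correct and is essentially the paper's own argument: a half-swap $g_1$ on one of the two $\eps$-intervals, conjugated through $f$ to produce the disjoint half-swap on the other, then conjugation by the middle swap $g_2$ to merge the two half-swaps into the full swap of type $\eps$. The only (immaterial) difference is that you place $g_1$ on $A\subset J$ and take the partner interval to be $f^{-1}(A)$, which forces you to also require $J$ to sit in a single image cell, whereas the paper puts $g_1$ on $f(I_0)$ with $I_0\subset J$ and needs no such extra condition.
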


\begin{proof}
Since $f$ is not the identity, we can find an interval $J=[x,y)$ such that
$f$ translates $J$ by some $t\ne0$.  Let $\eps_0=\min(y-x,|t|)$.  Given any
$\eps$, $0<\eps<\eps_0$, we introduce two intervals $I_0=[x,x+\eps)$ and
$I_1=[x+t,x+t+\eps)$.  By construction, $I_0$ and $I_1$ do not overlap.
Besides, $f$ translates $I_0$ onto $I_1$.  Let $g_0$ be an interval swap
map of type $\eps/2$ that interchanges two halves $I_{01}=[x,x+\eps/2)$ and
$I_{02}=[x+\eps/2,x+\eps)$ of the interval $I_0$.  Let $g_1$ be an interval
swap map of type $\eps/2$ that interchanges two halves
$I_{11}=[x+t,x+t+\eps/2)$ and $I_{12}=[x+t+\eps/2,x+t+\eps)$ of $I_1$.
Since $f$ translates $I_0$ onto $I_1$, it follows that $g_0=f^{-1}g_1f$.
Further, let $g_2$ be an interval swap map of type $\eps/2$ that
interchanges $I_{02}$ with $I_{11}$.  The maps $g_0,g_1,g_2$ permute the
nonoverlapping intervals $I_{01},I_{02},I_{11},I_{12}$ by translation and
fix the rest of the interval $I$.  It is easy to check that $g_2g_0g_1g_2
=g_2f^{-1}g_1fg_1g_2$ is an interval swap map of type $\eps$ that
interchanges $I_0$ with $I_1$.
\end{proof}

\section{Scissors congruence invariant}\label{inv}

Let us recall the construction of the tensor product.  Suppose $V$ and $W$
are vector spaces over a field $F$.  Let $Z(V,W)$ be a vector space over
$F$ with basis $\{z[v,w]\}_{(v,w)\in V\times W}$.  Let $Y(V,W)$ denote the
subspace of $Z(V,W)$ spanned by all vectors of the form
$z[v_1+v_2,w]-z[v_1,w]-z[v_2,w]$, $z[v,w_1+w_2]-z[v,w_1]-z[v,w_2]$,
$z[\alpha v,w]-\alpha z[v,w]$, and $z[v,\alpha w]-\alpha z[v,w]$, where
$v,v_1,v_2\in V$, $w,w_1,w_2\in W$, and $\alpha\in F$.  The {\em tensor
product\/} of the spaces $V$ and $W$ over the field $F$, denoted
$V\otimes_F W$, is the quotient of the vector space $Z(V,W)$ by $Y(V,W)$.
For any $v\in V$ and $w\in W$ the coset $z[v,w]+Y(V,W)$ is denoted
$v\otimes w$.  By construction, $(v,w)\mapsto v\otimes w$ is a bilinear
mapping on $V\times W$.  In the case $V=W$, for any vectors $v,w\in V$ we
define the {\em wedge product\/} $v\wedge w=v\otimes w-w\otimes v$.  The
subspace of $V\otimes_F V$ spanned by all wedge products is denoted
$V\wedge_F V$.  By construction, $(v,w)\mapsto v\wedge w$ is a bilinear,
skew-symmetric mapping on $V\times V$.

\begin{lemma}\label{inv1}
Suppose $V$ is a vector space over a field $F$ and $v_1,v_2,\dots,v_k\in V$
are linearly independent vectors.  Then the wedge products $v_i\wedge v_j$,
$1\le i<j\le k$, are linearly independent in $V\wedge_F V$.
\end{lemma}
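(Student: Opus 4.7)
The plan is to reduce the statement to linear independence of elementary tensors with respect to a basis. First I would extend $v_1,\dots,v_k$ to a Hamel basis $\{v_\alpha\}_{\alpha\in A}$ of $V$ over $F$, with $\{1,\dots,k\}\subset A$. The main technical step is then to establish that the family $\{v_\alpha\otimes v_\beta\}_{(\alpha,\beta)\in A\times A}$ is linearly independent in $V\otimes_F V$.

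To verify that technical step, I would construct, for each ordered pair $(\gamma,\delta)\in A\times A$, a coordinate functional $\varphi_{\gamma\delta}\colon V\otimes_F V\to F$ sending $v_\gamma\otimes v_\delta$ to $1$ and every other basis tensor $v_\alpha\otimes v_\beta$ with $(\alpha,\beta)\ne(\gamma,\delta)$ to $0$. Using the paper's explicit construction of $V\otimes_F V$ as $Z(V,V)/Y(V,V)$, I would first define an $F$-linear map on $Z(V,V)$ by assigning to each basis vector $z[v,w]$ the scalar $[v]_\gamma\,[w]_\delta$, where $[v]_\gamma$ denotes the coefficient of $v_\gamma$ in the Hamel-basis expansion of $v$. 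A routine check on the four types of generators listed for $Y(V,W)$ shows that this functional kills $Y(V,V)$ and so descends to the required $\varphi_{\gamma\delta}$.

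Granted the linear independence of $\{v_\alpha\otimes v_\beta\}$, the lemma drops out quickly. Assume a relation
$$\sum_{1\le i<j\le k}c_{ij}\,(v_i\wedge v_j)=0$$
in $V\wedge_F V$. Expanding each wedge and regrouping yields
$$\sum_{i<j}c_{ij}\,v_i\otimes v_j\;-\;\sum_{i<j}c_{ij}\,v_j\otimes v_i=0$$
in $V\otimes_F V$. Since the pairs $(i,j)$ with $i<j$ together with the pairs $(j,i)$ with $i<j$ are all distinct elements of $A\times A$, the left-hand side is a linear combination of distinct basis tensors; applying $\varphi_{pq}$ for each fixed $p<q$ extracts $c_{pq}=0$.

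The main obstacle is the construction and verification of the coordinate functionals $\varphi_{\gamma\delta}$. This is classical tensor-product bookkeeping rather than content specific to interval exchange transformations, and it is the only nontrivial input; once it is in place, the antisymmetrization step is purely formal.
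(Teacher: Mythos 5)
Your proposal is correct and follows essentially the same route as the paper: the coordinate functional $\varphi_{\gamma\delta}$ induced by $z[v,w]\mapsto[v]_\gamma[w]_\delta$ is exactly the paper's bilinear form $\om_{lm}$ (equal to $1$ on the basis pair $(v_l,v_m)$ and $0$ on all others) passed through the quotient $Z(V,V)/Y(V,V)$, and the final coefficient extraction is identical. No gaps.
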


\begin{proof}
For any bilinear function $\om:V\times V\to F$ let $\tilde\om$ denote a
unique linear function on $Z(V,V)$ such that $\tilde\om(z[v,w])=\om(v,w)$
for all $v,w\in V$.  Since $\om$ is bilinear, the function $\tilde\om$
vanishes on the subspace $Y(V,V)$.  Hence it gives rise to a linear
function $\hat\om:V\otimes_F V\to F$.  By construction, $\hat\om(v\otimes
w)=\om(v,w)$ for all $v,w\in V$.

Let us extend the set $\{v_1,v_2,\dots,v_k\}$ to a basis $S$ for the vector
space $V$.  For any $l,m\in\{1,2,\dots,k\}$ we denote by $\om_{lm}$ a
unique bilinear function on $V\times V$ such that $\om_{lm}(v,w)=1$ if
$(v,w)=(v_l,v_m)$ and $\om_{lm}(v,w)=0$ for any other pair $(v,w)\in
S\times S$.  The function $\om_{lm}$ gives rise to a linear function
$\hat\om_{lm}$ on $V\otimes_F V$ as described above.  For any
$i,j\in\{1,2,\dots,k\}$, $i\ne j$, we have $\hat\om_{lm}(v_i\wedge v_j)=1$
if $i=l$ and $j=m$, $\hat\om_{lm}(v_i\wedge v_j)=-1$ if $i=m$ and $j=l$,
and $\hat\om_{lm}(v_i\wedge v_j)=0$ otherwise.

Consider an arbitrary linear combination
$$
\xi=\sum\nolimits_{1\le i<j\le k}r_{ij}(v_i\wedge v_j)
$$
with coefficients $r_{ij}$ from $F$.  It is easy to observe that
$\hat\om_{lm}(\xi)=r_{lm}$ for any $1\le l<m\le k$.  Therefore $\xi\ne0$
unless all $r_{ij}$ are zeros.  Thus the wedge products $v_i\wedge v_j$,
$1\le i<j\le k$, are linearly independent over $F$.
\end{proof}

Let $f$ be an interval exchange transformation of an interval $I=[p,q)$.
Consider an arbitrary partition of $I$ into subintervals, $I=I_1\cup I_2
\cup\ldots\cup I_k$, such that the restriction of $f$ to any $I_i$ is a
translation by some $t_i$.  Let $\la_i$ be the length of $I_i$, $1\le i\le
k$.  The {\em scissors congruence invariant}, also known as the {\em
Sah-Arnoux-Fathi (SAF) invariant}, of $f$ is
$$
\SAF(f)=\la_1\otimes t_1+\la_2\otimes t_2+\cdots+\la_k\otimes t_k
$$
regarded as an element of the tensor product $\bR\otimes_{\bQ}\bR$.  One
can easily check that $\SAF(f)=a\wedge b$ for any restricted rotation $f$
of type $(a,b)$ and $\SAF(g)=0$ for any interval swap map $g$.  The term
`scissors congruence invariant' is partially explained by the following
lemma.

\begin{lemma}\label{inv2}
The scissors congruence invariant $\SAF(f)$ of an interval exchange
transformation $f$ does not depend on the combinatorial description of $f$.
\end{lemma}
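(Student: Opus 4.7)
The plan is to prove invariance of the SAF under refinement of the partition, and then use the fact that any two admissible partitions have a common refinement.

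First I would set up the common refinement. Suppose $I = I_1 \cup \cdots \cup I_k$ and $I = J_1 \cup \cdots \cup J_m$ are two partitions of $I$ into half-closed subintervals such that $f$ restricts to a translation by $t_i$ on each $I_i$ and to a translation by $s_j$ on each $J_j$. Let $K_{ij} = I_i \cap J_j$. Each nonempty $K_{ij}$ is a half-closed interval (being the intersection of two such), and on $K_{ij}$ the map $f$ is the translation by $t_i$ (since $K_{ij} \subseteq I_i$) and also by $s_j$ (since $K_{ij} \subseteq J_j$); hence $t_i = s_j$ whenever $K_{ij}$ is nonempty. Writing $\mu_{ij}$ for the length of $K_{ij}$ (with $\mu_{ij}=0$ if $K_{ij}$ is empty), we have $\sum_j \mu_{ij} = \lambda_i$ and $\sum_i \mu_{ij} = |J_j|$.

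Next I would compare the two sums using bilinearity of the tensor product over $\bQ$. By the defining relation $(u+v)\otimes w = u\otimes w + v\otimes w$, additivity in the first argument (extended to finite sums) gives
$$
\sum_{i=1}^{k} \la_i \otimes t_i = \sum_{i=1}^{k} \Bigl(\sum_{j=1}^{m} \mu_{ij}\Bigr) \otimes t_i = \sum_{i,j} \mu_{ij} \otimes t_i.
$$
Whenever $\mu_{ij} \ne 0$ we have $t_i = s_j$, and whenever $\mu_{ij}=0$ the term vanishes, so $\mu_{ij}\otimes t_i = \mu_{ij}\otimes s_j$ in every case. Symmetrically, $\sum_j |J_j| \otimes s_j = \sum_{i,j} \mu_{ij}\otimes s_j$, and these two double sums agree. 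Hence $\sum_i \la_i\otimes t_i = \sum_j |J_j|\otimes s_j$ in $\bR\otimes_{\bQ}\bR$, proving that $\SAF(f)$ is the same for both combinatorial descriptions.

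There is no real obstacle here: the only thing to be careful about is that we use the full defining relations of $\otimes_{\bQ}$ (specifically $\bZ$-bilinearity, which is built into the construction reviewed at the start of the section) rather than any multiplicative structure on $\bR$. The argument makes no use of $\pi$; it only uses the partition data and the translations, which is appropriate since the statement is precisely that $\SAF$ depends only on $f$.
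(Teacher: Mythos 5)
Your proof is correct and follows essentially the same route as the paper: intersecting the two partitions, observing that $t_i=s_j$ whenever the intersection is nonempty, and using additivity of $\otimes$ in the first slot to identify both sums with the common double sum $\sum_{i,j}\mu_{ij}\otimes t_i$. No gaps.
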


\begin{proof}
Let $I=I_1\cup\ldots\cup I_k$ be a partition of the interval $I$ into
subintervals such that the restriction of $f$ to any $I_i$ is a translation
by some $t_i$.  Let $I=I'_1\cup\ldots\cup I'_m$ be another partition into
subintervals such that the restriction of $f$ to any $I'_j$ is a
translation by some $t'_j$.  Let $\la_i$ denote the length of $I_i$ ($1\le
i\le k$) and $\la'_j$ denote the length of $I'_j$ ($1\le j\le m$).  We have
to show that $\xi=\la_1\otimes t_1+\cdots+\la_k\otimes t_k$ coincides with
$\xi'=\la'_1\otimes t'_1+\cdots+\la'_m\otimes t'_m$ in
$\bR\otimes_{\bQ}\bR$.

For any $1\le i\le k$ and $1\le j\le m$ the intersection $I_i\cap I'_j$ is
either an interval or the empty set.  We let $\mu_{ij}$ be the length of
the interval in the former case and $\mu_{ij}=0$ otherwise.  Further, let
$$
\eta=\sum\nolimits_{i=1}^k\sum\nolimits_{j=1}^m \mu_{ij}\otimes t_i, \qquad
\eta'=\sum\nolimits_{i=1}^k\sum\nolimits_{j=1}^m \mu_{ij}\otimes t'_j.
$$
Clearly, $t_i=t'_j$ whenever $I_i\cap I'_j$ is an interval.  Otherwise
$\mu_{ij}=0$ and $0\otimes t_i=0=0\otimes t'_j$.  In any case,
$\mu_{ij}\otimes t_i=\mu_{ij}\otimes t'_j$.  Therefore $\eta=\eta'$.  For
any $i\in\{1,2,\dots,k\}$, nonempty intersections $I_i\cap I'_j$, $1\le
j\le m$, form a partition of the interval $I_i$ into subintervals.  Hence
$\la_i=\mu_{i1}+\mu_{i2}+\dots+\mu_{im}$.  It follows that $\eta=\xi$.
Similarly, we obtain that $\eta'=\xi'$.  Thus $\xi=\eta=\eta'=\xi'$.
\end{proof}

In view of Lemma \ref{inv2}, for any interval $I=[p,q)$ we can consider the
invariant $\SAF$ as a function on $\cG_I$, the set of all interval exchange
transformations of $I$.

\begin{lemma}\label{inv3}
The scissors congruence invariant $\SAF$ is a homomorphism of the group
$\cG_I$ to $\bR\otimes_{\bQ}\bR$.
\end{lemma}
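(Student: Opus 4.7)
The plan is to show $\SAF(fg)=\SAF(f)+\SAF(g)$ for arbitrary $f,g\in\cG_I$ by exhibiting a single partition of $I$ that is fine enough to compute all three invariants at once, and then exploiting bilinearity of $\otimes$ together with the independence-of-description result of Lemma~\ref{inv2}.

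First I would fix a partition $I=J_1\cup\cdots\cup J_n$ into subintervals with the following two properties: (i) $g$ restricted to each $J_i$ is a translation by some $s_i$, and (ii) each image $g(J_i)$ is contained in a single subinterval on which $f$ acts as a translation by some $t_i$. Such a partition is obtained by refining any partition compatible with $g$, inserting the $g$-preimages of the breakpoints of $f$; this is a finite operation and yields finitely many half-closed subintervals. On this common refinement, $fg$ restricts to a translation by $s_i+t_i$ on each $J_i$.

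Next I would compute the three invariants using this single partition and its image under $g$. Letting $\mu_i$ be the length of $J_i$, the partition $\{J_i\}$ is compatible with the description of $g$ and of $fg$, while $\{g(J_i)\}$ is a partition of $I$ by intervals of lengths $\mu_i$ compatible with the description of $f$. By Lemma~\ref{inv2},
\[
\SAF(g)=\sum_{i=1}^n\mu_i\otimes s_i,\qquad
\SAF(f)=\sum_{i=1}^n\mu_i\otimes t_i,\qquad
\SAF(fg)=\sum_{i=1}^n\mu_i\otimes(s_i+t_i).
\]
Bilinearity of the tensor product over $\bQ$ gives $\mu_i\otimes(s_i+t_i)=\mu_i\otimes s_i+\mu_i\otimes t_i$, and summing yields $\SAF(fg)=\SAF(f)+\SAF(g)$.

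The only real obstacle is the construction of the common refinement and the verification that $\{g(J_i)\}$ is actually a partition of $I$ into subintervals of the same lengths as $\{J_i\}$; this is a purely bookkeeping point that follows from $g$ being a bijective piecewise translation. Once Lemma~\ref{inv2} is in hand, the homomorphism property is then immediate from bilinearity.
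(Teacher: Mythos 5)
Your proposal is correct and takes essentially the same approach as the paper: the paper likewise chooses a single partition on which both $g$ and $fg$ act by translations (equivalent to your common refinement), computes $\SAF(f)$ via the image partition $\{g(J_i)\}$, and concludes by bilinearity of $\otimes$. No gaps.
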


\begin{proof}
Consider arbitrary interval exchange transformations $f$ and $g$ of the
interval $I$.  We have to show that $\SAF(fg)=\SAF(f)+\SAF(g)$.  Let
$I=I_1\cup I_2\cup\ldots\cup I_k$ be a partition of $I$ into subintervals
such that the restrictions of both $g$ and $fg$ to any $I_i$ are
translations by some $t_i$ and $t'_i$, respectively.  Let $\la_i$ be the
length of $I_i$, $1\le i\le k$.  Then
\begin{eqnarray*}
\SAF(g) &=& \la_1\otimes t_1+\la_2\otimes t_2+\cdots+\la_k\otimes t_k,\\
\SAF(fg) &=& \la_1\otimes t'_1+\la_2\otimes t'_2+\cdots+\la_k\otimes t'_k.
\end{eqnarray*}
It is easy to see that for any $1\le i\le k$ the image $g(I_i)$ is an
interval of length $\la_i$ and the restriction of $f$ to $g(I_i)$ is the
translation by $t'_i-t_i$.  Besides, the intervals $g(I_1),g(I_2),\dots,
g(I_k)$ form another partition of $I$.  It follows that
$$
\SAF(f)=\la_1\otimes(t'_1-t_1)+\la_2\otimes(t'_2-t_2)+\cdots
+\la_k\otimes(t'_k-t_k).
$$
Since $\la_i\otimes(t'_i-t_i)+\la_i\otimes t_i=\la_i\otimes t'_i$ for all
$1\le i\le k$, we obtain that $\SAF(fg)=\SAF(f)+\SAF(g)$.
\end{proof}

In the remainder of this section we show that $\SAF$ is actually a
homomorphism of $\cG_I$ onto $\bR\wedge_{\bQ}\bR$.

\begin{lemma}\label{inv4}
For any $a,b,\eps>0$ there exist pairs of positive numbers
$(a_1,b_1)$, $(a_2,b_2),\dots,(a_n,b_n)$ such that
\begin{itemize}
\item
$(a_1,b_1)=(a,b)$,
\item
$(a_{i+1},b_{i+1})=(a_i-b_i,b_i)$ or $(a_{i+1},b_{i+1})=(a_i,b_i-a_i)$ for
$1\le i\le n-1$,
\item
$a_n+b_n<\eps$ or $a_n=b_n$.
\end{itemize}
\end{lemma}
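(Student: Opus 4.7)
The plan is to run a greedy subtractive Euclidean algorithm on $(a,b)$: while $a_i\ne b_i$ and $a_i+b_i\ge\eps$, set $(a_{i+1},b_{i+1})=(a_i-b_i,b_i)$ if $a_i>b_i$, and $(a_{i+1},b_{i+1})=(a_i,b_i-a_i)$ otherwise. By construction each pair consists of two positive numbers and obeys one of the two allowed transition rules, so the only thing to verify is that this loop halts after finitely many steps.

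To analyze termination I would group consecutive steps that subtract the same coordinate into ``phases''. A phase starting from $(a_i,b_i)$ with, say, $a_i>b_i$ subtracts $b_i$ from the first coordinate repeatedly; it ends either when the first coordinate hits exactly $b_i$ (in which case the loop halts with $a_n=b_n$) or when it drops strictly below $b_i$. Setting $s_0=\max(a,b)$, $s_1=\min(a,b)$, and $s_{k+1}=s_{k-1}\bmod s_k$ whenever $s_k\ne 0$, one checks that the unordered pair of coordinates at the end of the $k$-th phase is $\{s_k,s_{k+1}\}$. Thus $(s_k)$ is precisely the Euclidean remainder sequence of $(a,b)$, and the sum of coordinates at the end of the $k$-th phase equals $s_k+s_{k+1}$.

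Next I would establish the geometric bound $s_{k+1}\le s_{k-1}/2$ for every $k\ge 1$: if $s_k\le s_{k-1}/2$ then $s_{k+1}<s_k\le s_{k-1}/2$; otherwise $s_k>s_{k-1}/2$ forces $\lfloor s_{k-1}/s_k\rfloor=1$, so $s_{k+1}=s_{k-1}-s_k<s_{k-1}/2$. Consequently $s_k\to 0$ at least geometrically, so $s_k+s_{k+1}<\eps$ for all sufficiently large $k$, yielding an index $n$ with $a_n+b_n<\eps$. If instead some $s_k$ divides $s_{k-1}$ exactly (which is the case $a/b\in\bQ$), the corresponding phase ends at equality and the algorithm halts with $a_n=b_n$.

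The main obstacle is simply the bookkeeping that links the ``fast'' Euclidean sequence $(s_k)$ to the ``slow'' subtractive pairs $(a_i,b_i)$ and confirms strict positivity throughout each phase; this is routine but should be written out so that the stopping index $n$ is identified explicitly in both the rational and irrational cases.
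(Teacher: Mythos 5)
Your construction is identical to the paper's (the subtractive Euclidean algorithm applied to $(a,b)$), and your proof is correct; the only place where you diverge is in how termination is established. You pass to the ``fast'' Euclidean remainder sequence $s_{k+1}=s_{k-1}\bmod s_k$ and prove the classical halving bound $s_{k+1}\le s_{k-1}/2$, which gives geometric decay of the end-of-phase sums $s_k+s_{k+1}$ and hence an explicit rate. The paper instead works directly with the slow sequence: setting $c_i=\min(a_i,b_i)$, it observes that $a_{i+1}+b_{i+1}=(a_i+b_i)-c_i$, so the series $\sum c_i$ converges and $c_i\to0$; then, at each index where $c$ strictly decreases, $a_{i+1}+b_{i+1}=c_i+c_{i+1}$, and monotonicity of $a_i+b_i$ finishes the argument. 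Your route costs a bit of bookkeeping (grouping steps into phases and checking that each phase is finite and lands on $\{s_k,s_{k+1}\}$, with the rational case treated separately) but buys a quantitative geometric bound; the paper's is softer and avoids the phase decomposition entirely. Either argument suffices for the lemma, since only qualitative convergence is needed downstream.
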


\begin{proof}
We define a finite or infinite sequence of pairs inductively.  First of
all, $(a_1,b_1)=(a,b)$.  Further, assume that the pair $(a_i,b_i)$ is
defined for some positive integer $i$.  If $a_i=b_i$ then this is the last
pair in the sequence.  Otherwise we let $(a_{i+1},b_{i+1})=(a_i-b_i,b_i)$
if $a_i>b_i$ and $(a_{i+1},b_{i+1})=(a_i,b_i-a_i)$ if $a_i<b_i$.  Since
$a,b>0$, it follows by induction that $a_i,b_i>0$ for all $i$.  If the
sequence $(a_1,b_1),(a_2,b_2),\dots$ is finite and contains $n$ pairs, then
$a_n=b_n$ and we are done.  If the sequence is infinite, it is enough to
show that $a_n+b_n\to0$ as $n\to\infty$.  For any positive integer $n$ let
$c_n=\min(a_n,b_n)$.  Since $a_1,a_2,\dots$ and $b_1,b_2,\dots$ are
nonincreasing sequences of positive numbers, so is the sequence
$c_1,c_2,\dots$.  By construction, $a_{i+1}+b_{i+1}=(a_i+b_i)-c_i$ for all
$i$.  It follows that the series $c_1+c_2+\cdots$ is convergent.  In
particular, $c_n\to0$ as $n\to\infty$.  Note that if $c_{i+1}<c_i$ for some
$i$, then $c_i=\max(a_{i+1},b_{i+1})$ so that $a_{i+1}+b_{i+1}
=c_i+c_{i+1}$.  This implies $a_n+b_n\to0$ as $n\to\infty$.
\end{proof}

\begin{lemma}\label{inv5}
For any $a,b\in\bR$ and $\eps>0$ there exist $a_0,b_0>0$, $a_0+b_0<\eps$,
such that $a\wedge b=a_0\wedge b_0$ in $\bR\wedge_{\bQ}\bR$.
\end{lemma}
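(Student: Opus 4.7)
The plan is to combine the $\bQ$-bilinearity and skew-symmetry of the wedge product with the Euclidean-algorithm style sequence produced by Lemma \ref{inv4}. First I would handle the easy degenerate cases. If $a=0$ or $b=0$, then $a\wedge b=0$; and since $c\wedge c = c\otimes c-c\otimes c = 0$ for any $c\in\bR$, I can take $a_0=b_0=\eps/3$.

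For nonzero $a,b$, I would reduce to the case $a,b>0$. Using $(-v)\otimes w = -(v\otimes w)$ and $v\otimes(-w) = -(v\otimes w)$ (both valid since $-1\in\bQ$), one checks that $(-a)\wedge(-b)=a\wedge b$, $(-a)\wedge b = b\wedge a$ when $a<0<b$, and similarly in the remaining mixed-sign case. So after possibly negating one or both arguments and swapping them (antisymmetry allows the swap), I can assume $a,b>0$ throughout.

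Now suppose $a,b>0$ and apply Lemma \ref{inv4} to obtain a finite sequence $(a_1,b_1),(a_2,b_2),\dots,(a_n,b_n)$ of pairs of positive reals with $(a_1,b_1)=(a,b)$ and each successor equal either to $(a_i-b_i,b_i)$ or to $(a_i,b_i-a_i)$. The crucial observation is that each such step preserves the wedge product: indeed
$$
(a_i-b_i)\wedge b_i = a_i\wedge b_i - b_i\wedge b_i = a_i\wedge b_i,
$$
and symmetrically $a_i\wedge(b_i-a_i)=a_i\wedge b_i$, because $v\wedge v=0$. So $a\wedge b = a_n\wedge b_n$. By Lemma \ref{inv4}, either $a_n+b_n<\eps$, in which case I take $(a_0,b_0)=(a_n,b_n)$ and I am done, or $a_n=b_n$, in which case $a\wedge b = a_n\wedge a_n = 0$, and I again fall back on the trivial representation $a_0=b_0=\eps/3$.

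I do not expect serious obstacles: the whole argument is a translation between the geometric Euclidean-algorithm of Lemma \ref{inv4} and the algebraic identity $v\wedge v=0$. The only point that requires a little care is the sign-reduction step, since one must be sure that $\bQ$-bilinearity (not just $\bR$-bilinearity, which does not hold) suffices to pull out the factor of $-1$; but this is fine because $-1\in\bQ$.
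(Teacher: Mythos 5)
Your proof is correct and follows essentially the same route as the paper: reduce to $a,b>0$ via $\bQ$-bilinearity and antisymmetry, run the subtractive Euclidean algorithm of Lemma \ref{inv4}, and use $v\wedge v=0$ to see each step preserves the wedge product. The only cosmetic difference is that the paper splits on $a\wedge b=0$ versus $a\wedge b\ne0$ at the outset (so that the terminal case $a_n=b_n$ cannot occur when $a\wedge b\ne0$), whereas you handle that terminal case at the end; both are fine.
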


\begin{proof}
Note that $c\wedge c=0$ for all $c\in\bR$.  Therefore in the case $a\wedge
b=0$ it is enough to take $a_0=b_0=c$, where $0<c<\eps/2$.

Now assume that $a\wedge b\ne0$.  Clearly, in this case $a$ and $b$ are
nonzero.  Since $(-a)\wedge(-b)=a\wedge b$ and $(-a)\wedge b=a\wedge(-b)
=b\wedge a$ for all $a,b\in\bR$, it is no loss to assume that $a$ and $b$
are positive.  By Lemma \ref{inv4}, there exist pairs of positive numbers
$(a_1,b_1)=(a,b)$, $(a_2,b_2),\dots,(a_n,b_n)$ such that
$(a_{i+1},b_{i+1})=(a_i-b_i,b_i)$ or $(a_{i+1},b_{i+1})=(a_i,b_i-a_i)$ for
$1\le i\le n-1$, and also $a_n+b_n<\eps$ or $a_n=b_n$.  Since
$(a'-b')\wedge b'=a'\wedge b'-b'\wedge b'=a'\wedge b'$ and
$a'\wedge(b'-a')=a'\wedge b'-a'\wedge a'=a'\wedge b'$ for all
$a',b'\in\bR$, it follows by induction that $a_i\wedge b_i=a\wedge b$,
$i=1,2,\dots,n$.  Then $a_n\ne b_n$ as $a_n\wedge b_n=a\wedge b\ne0$.  Thus
$a_n+b_n<\eps$.
\end{proof}

\begin{lemma}\label{inv6}
An element $\xi\in\bR\otimes_{\bQ}\bR$ is the scissors congruence invariant
of some interval exchange transformation in $\cG_I$ if and only if
$\xi\in\bR\wedge_{\bQ}\bR$.
\end{lemma}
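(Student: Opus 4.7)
The plan is to establish the two inclusions separately. The forward direction, that $\SAF(f)\in\bR\wedge_\bQ\bR$ for every IET $f$, is essentially immediate from earlier results. The reverse direction, that every element of $\bR\wedge_\bQ\bR$ is realized as some $\SAF(f)$, is the content-ful one; it reduces to fitting finitely many disjoint restricted rotations inside $I$, which is exactly what Lemma \ref{inv5} enables.

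For the forward direction, I will take an arbitrary $f\in\cG_I$ and apply Lemma \ref{elem1} to write $f=h_1h_2\cdots h_m$ as a product of restricted rotations. Lemma \ref{inv3} then gives $\SAF(f)=\sum_{j=1}^{m}\SAF(h_j)$, and the observation recorded just after the definition of $\SAF$ says that $\SAF(h_j)=a_j\wedge b_j$ whenever $h_j$ has type $(a_j,b_j)$. Therefore $\SAF(f)$ is a finite sum of wedge products and lies in $\bR\wedge_\bQ\bR$.

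For the reverse direction, given $\xi\in\bR\wedge_\bQ\bR$, I first write $\xi$ as a finite sum $\xi=\sum_{i=1}^{n}a_i\wedge b_i$ of wedge products with no explicit rational coefficients (absorbing any $r\in\bQ$ into one factor via $r(v\wedge w)=(rv)\wedge w$). Applying Lemma \ref{inv5} to each summand with $\eps=(q-p)/n$ produces positive reals $a_i',b_i'$ with $a_i'+b_i'<(q-p)/n$ and $a_i'\wedge b_i'=a_i\wedge b_i$. Since the total length $\sum_i(a_i'+b_i')$ is strictly less than $q-p$, I can place $n$ pairwise disjoint subintervals of $I$ of lengths $a_1'+b_1',\dots,a_n'+b_n'$ and let $f_i$ be the restricted rotation of type $(a_i',b_i')$ supported on the $i$-th one (identity elsewhere). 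The $f_i$ have pairwise disjoint supports, so they commute and their product $f=f_1f_2\cdots f_n$ is a well-defined IET; Lemma \ref{inv3} then gives $\SAF(f)=\sum_i\SAF(f_i)=\sum_i a_i'\wedge b_i'=\xi$.

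The only real obstacle is the reverse direction, and the difficulty is purely one of size: naively one would like to realize $a_i\wedge b_i$ by a restricted rotation of type $(a_i,b_i)$, but the $a_i,b_i$ could be negative, zero, or sum to more than the length of $I$. Lemma \ref{inv5} simultaneously handles positivity and smallness, and once each wedge has been reduced to an arbitrarily small positive pair, the disjoint-supports construction above packages all the pieces into a single interval exchange transformation of $I$.
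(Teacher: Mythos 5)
Your proof is correct and follows essentially the same route as the paper: the forward inclusion via Lemma \ref{elem1}, Lemma \ref{inv3}, and the computation $\SAF$ of a restricted rotation; the reverse inclusion by absorbing rational coefficients into the wedge factors and realizing each wedge product via Lemma \ref{inv5} as a restricted rotation fitting inside $I$. Your extra care in choosing $\eps=(q-p)/n$ and disjoint supports is harmless but not needed, since the composition of the restricted rotations is an element of $\cG_I$ whatever their supports, and $\SAF$ of the product is the sum of the invariants by Lemma \ref{inv3} alone.
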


\begin{proof}
As already mentioned before, the SAF invariant of a restricted rotation of
type $(a,b)$ is $a\wedge b$.  By Lemma \ref{elem1}, any $f\in\cG_I$ is a
product of restricted rotations.  Since $\SAF$ is a homomorphism of the
group $\cG_I$ due to Lemma \ref{inv3}, we obtain that $\SAF(f)$ is a finite
sum of wedge products.  Hence $\SAF(f)\in\bR\wedge_{\bQ}\bR$.

Let $l$ denote the length of the interval $I$.  By Lemma \ref{inv5}, for
any $a,b\in\bR$ one can find $a_0,b_0>0$, $a_0+b_0<l$, such that $a\wedge b
=a_0\wedge b_0$.  By the choice of $a_0$ and $b_0$, the group $\cG_I$
contains a restricted rotation of type $(a_0,b_0)$.  It follows that any
wedge product in $\bR\wedge_{\bQ}\bR$ is the SAF invariant of some interval
exchange transformation in $\cG_I$.  Since $\SAF$ is a homomorphism of
$\cG_I$, any sum of wedge products is also the SAF invariant of some
$f\in\cG_I$.

Any $\xi\in\bR\wedge_{\bQ}\bR$ is a linear combination of wedge products
with rational coefficients.  Since $r(a\wedge b)=(ra)\wedge b$ for all
$a,b\in\bR$ and $r\in\bQ$, the element $\xi$ can also be represented as a
sum of wedge products.  By the above, $\xi=\SAF(f)$ for some $f\in\cG_I$.
\end{proof}

\section{Commutator group}\label{comm}

We begin this section with a technical lemma that will be used in the proof
of the principal Lemma \ref{comm2} below.

\begin{lemma}\label{comm1}
Suppose $L_1,L_2,\dots,L_k$ are positive numbers.  Then there exist
positive numbers $l_1,l_2,\dots,l_m$ linearly independent over $\bQ$ such
that each $L_i$ is a linear combination of $l_1,l_2,\dots,l_m$ with
nonnegative integer coefficients.
\end{lemma}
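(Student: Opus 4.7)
The plan is to induct on $d:=\dim_{\bQ}\mathrm{span}_{\bQ}(L_1,\ldots,L_k)$, the rational dimension of the span of the given numbers (rather than on $k$). The base case $d=1$ is immediate: each $L_i$ is a positive rational multiple $q_iv$ of a single positive generator $v$, and taking $l_1=v/N$ for a common denominator $N$ of the $q_i$ makes every $L_i$ a positive integer multiple of $l_1$.

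For the inductive step ($d\geq 2$), the strategy is to peel off one dimension using a well-chosen $\bQ$-linear functional. Write $V=\mathrm{span}_{\bQ}(L_1,\ldots,L_k)$. First I will exhibit a $\bQ$-linear map $\phi:V\to\bQ$ with $\phi(L_i)>0$ for every $i$. Since $\ker\phi$ is a nontrivial $\bQ$-subspace of $\bR$, the coset $\phi^{-1}(1)$ is dense in $\bR$, so I can pick $e\in\phi^{-1}(1)$ with $0<e<\min_i L_i/\phi(L_i)$. Writing $L_i=\phi(L_i)\,e+x_i$ then forces each $x_i\in\ker\phi$ to be positive. Applying the inductive hypothesis to the positive reals $x_1,\ldots,x_k$, which lie in the $(d-1)$-dimensional $\bQ$-space $\ker\phi$, produces $\bQ$-independent positive numbers $l_1,\ldots,l_r\in\ker\phi$ that represent each $x_i$ as a nonnegative integer combination. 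Finally, with $N$ a common denominator of the positive rationals $\phi(L_1),\ldots,\phi(L_k)$, set $l_{r+1}=e/N$: it is positive, $\bQ$-independent from $l_1,\ldots,l_r$ (it has nonzero $\phi$-value while the others lie in $\ker\phi$), and the identity $L_i=(N\phi(L_i))\,l_{r+1}+x_i$ expresses each $L_i$ as a nonnegative integer combination of $l_1,\ldots,l_{r+1}$.

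The step I expect to be the crux is the existence of $\phi$. After fixing any $\bQ$-basis $b_1,\ldots,b_d$ of $V$, the functional $\phi$ is determined by its values $y_j=\phi(b_j)\in\bQ$, and the conditions $\phi(L_i)>0$ become $k$ strict linear inequalities with rational coefficients in the variables $(y_1,\ldots,y_d)$. Substituting the real numbers $b_j$ themselves for $y_j$ yields $\phi(L_i)=L_i>0$, so the real solution set is a nonempty open cone in $\bR^d$; density of $\bQ^d$ in $\bR^d$ then produces a rational solution. This small density step is the one piece of real-analytic input that the argument needs; everything else is bookkeeping.
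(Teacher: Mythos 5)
Your argument is correct, but it takes a genuinely different route from the paper's. The paper inducts on $k$, the number of reals: having found $l_1,\dots,l_m$ working for $L_1,\dots,L_{k-1}$, it writes $L_k$ as a rational combination of them, separates positive and negative terms, and redistributes the negative part among the positive terms (with rational weights $r_1+\cdots+r_s=1$ chosen so that each modified generator stays positive), producing a new independent positive system in which $L_k$ is a sum of some generators and each old generator is a nonnegative rational combination of the new ones; a final scaling by $1/N$ clears denominators. You instead induct on $d=\dim_{\bQ}\mathrm{span}_{\bQ}(L_1,\dots,L_k)$ and peel off one dimension at a time via a rational linear functional $\phi$ positive on all the $L_i$, obtained by rationally perturbing the tautological assignment $y_j=b_j$ inside the open cone cut out by the inequalities $\phi(L_i)>0$. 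Your version is more structural and yields the slightly sharper conclusion that one can take $m=d$, so that the $l_j$ form a basis of the rational span of the $L_i$; the paper's version is more hands-on basis surgery and never needs to single out a functional, though both ultimately rest on the density of rationals (yours via $\bQ^d$ dense in $\bR^d$ and a coset of a nontrivial $\bQ$-subspace dense in $\bR$, the paper's via choosing the weights $r_t$).

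One point to tighten: the inductive hypothesis, as literally stated in the lemma, applied to $x_1,\dots,x_k$ does not by itself guarantee that the resulting $l_1,\dots,l_r$ lie in $\ker\phi$ --- in general the $l_j$ need not lie in the rational span of the numbers they represent (e.g.\ $x_1=l_1+l_2$ with $l_1,l_2$ independent) --- and you use this membership to conclude that $e/N$ is independent of $l_1,\dots,l_r$. The fix is routine: strengthen the statement being proved by induction to require that the $l_j$ lie in $\mathrm{span}_{\bQ}(L_1,\dots,L_k)$, which your construction already delivers at every stage (the base case takes $l_1=v/N$ in the span, and in the inductive step both $\ker\phi$ and $e$ sit inside $V$).
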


\begin{proof}
The proof is by induction on the number $k$ of the reals
$L_1,L_2,\dots,L_k$.  The case $k=1$ is trivial.  Now assume that $k>1$ and
the lemma holds for the numbers $L_1,L_2,\dots,L_{k-1}$.  That is, there
exist positive numbers $l_1,l_2,\dots,l_m$ linearly independent over $\bQ$
such that each $L_i$, $1\le i<k$ is a linear combination of
$l_1,l_2,\dots,l_m$ with nonnegative integer coefficients.  If the reals
$l_1,\dots,l_m$ and $L_k$ are linearly independent over $\bQ$, then we are
done.  Otherwise $L_k$ is a linear combination of $l_1,\dots,l_m$ with
rational coefficients.  Let us separate positive and negative terms in this
linear combination: $L_k=a_1l_{i_1}+\cdots+a_sl_{i_s}-(b_1l_{j_1}+\cdots+
b_pl_{j_p})$, where $a_{i_t},b_{j_t}$ are positive rationals and the
indices $i_1,\dots,i_s,j_1,\dots,j_p$ are all distinct.  It is possible
that there is no negative term at all.  Since $l_1,\dots,l_m$ and $L_k$ are
positive numbers, we can find positive rationals $r_1,\dots,r_s$ such that
$r_1+\cdots+r_s=1$ and $l'_{i_t}=a_tl_{i_t}-r_t(b_1l_{j_1}+\cdots+
b_pl_{j_p})$ is positive for $1\le t\le s$.  Let $l'_i=l_i$ for any
$1\le i\le m$ different from $i_1,\dots,i_s$.  Then $l'_1,\dots,l'_m$ are
positive numbers linearly independent over $\bQ$.  By construction,
$L_k=l'_{i_1}+\cdots+l'_{i_s}$ and $l_{i_t}=a_t^{-1}l'_{i_t}
+a_t^{-1}r_t(b_1l'_{j_1}+\cdots+b_pl'_{j_p})$ for $1\le t\le s$.  Therefore
each of the numbers $l_1,\dots,l_m$ and $L_k$ is a linear combination of
$l'_1,\dots,l'_m$ with nonnegative rational coefficients.  It follows that
each of the numbers $L_1,L_2,\dots,L_k$ is also a linear combination of
$l'_1,\dots,l'_m$ with nonnegative rational coefficients.  Then there
exists a positive integer $N$ such that each $L_i$ is a linear combination
of $l'_1/N,\dots,l'_m/N$ with nonnegative integer coefficients.
\end{proof}

Let us call a product of restricted rotations {\em balanced\/} if for any
$a,b>0$ the number of factors of type $(a,b)$ in this product matches the
number of factors of type $(b,a)$.

\begin{lemma}\label{comm2}
Any interval exchange transformation with zero SAF invariant can be
represented as a balanced product of restricted rotations.
\end{lemma}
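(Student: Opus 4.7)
The plan is to pass to a combinatorial description of $f$ whose interval lengths are all drawn from a set of $\bQ$-linearly independent positive reals, apply Lemma \ref{elem1} there, and then read off balance from the vanishing of $\SAF(f)$ using Lemma \ref{inv1}.

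First I would fix any combinatorial description $(\la,\pi)$ of $f$ with lengths $\la_1,\dots,\la_k$ and invoke Lemma \ref{comm1} to produce positive reals $l_1,\dots,l_m$, linearly independent over $\bQ$, such that each $\la_i$ is a nonnegative integer combination of the $l_j$'s. Then for each $i$ I would subdivide the interval $I_i$ from left to right into pieces whose lengths are precisely the $l_j$'s appearing in this combination. Since the restriction of $f$ to $I_i$ is a translation, $f$ is still a translation on every piece of the refinement, so this yields a (finer) combinatorial description of $f$ in which the set of interval lengths $\cL$ is contained in $\{l_1,\dots,l_m\}$.

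Next, assuming $f$ exchanges at least two intervals in this refined description, Lemma \ref{elem1} writes $f$ as a product of restricted rotations whose types all lie in $\cL\times\cL\subseteq\{l_1,\dots,l_m\}^2$. Let $n_{ij}$ denote the number of factors of type $(l_i,l_j)$ that occur. Applying the homomorphism $\SAF$ (Lemma \ref{inv3}) and using $\SAF(h)=a\wedge b$ for a restricted rotation of type $(a,b)$ gives
\[
0=\SAF(f)=\sum_{i,j}n_{ij}\,(l_i\wedge l_j)=\sum_{i<j}(n_{ij}-n_{ji})\,(l_i\wedge l_j).
\]
Since $l_1,\dots,l_m$ are linearly independent over $\bQ$, Lemma \ref{inv1} (applied to $\bR$ as a $\bQ$-vector space) ensures that the wedges $l_i\wedge l_j$ with $i<j$ are linearly independent in $\bR\wedge_{\bQ}\bR$. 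Hence $n_{ij}=n_{ji}$ for all $i<j$, which is exactly the definition of a balanced product (the case $i=j$ being automatic). The degenerate case in which $f$ is the identity on a single-interval description is handled as in the proof of Lemma \ref{elem1}: write $f=hh^{-1}$ for any restricted rotation $h$, which is balanced since the inverse of a type $(a,b)$ rotation is of type $(b,a)$.

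The only real obstacle is the setup step: Lemma \ref{elem1} only restricts rotation types to lengths that actually appear in the chosen combinatorial description, so one must first refine the description until its length set sits inside a $\bQ$-independent family. That is precisely what Lemma \ref{comm1} enables, and after the refinement everything else follows mechanically from Lemmas \ref{elem1}, \ref{inv1}, and \ref{inv3}.
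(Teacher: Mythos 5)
Your proposal is correct and follows the same route as the paper: refine the partition via Lemma \ref{comm1} so all lengths lie in a $\bQ$-independent set, apply Lemma \ref{elem1}, and then deduce balance from $\SAF(f)=0$ using Lemmas \ref{inv3} and \ref{inv1}, with the identity case handled by $f=hh^{-1}$. No gaps.
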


\begin{proof}
Consider an arbitrary interval exchange transformation $f$ of an interval
$I$.  If $f$ is the identity, then for any restricted rotation $h$ on $I$
we have $f=hh^{-1}$, which is a balanced product of restricted rotations.
Now assume $f$ is not the identity.  Let $I=I_1\cup\ldots\cup I_k$ be a
partition of $I$ into subintervals such that the restriction of $f$ to any
$I_i$ is a translation.  Note that $k\ge2$.  Let $L_1,L_2,\dots,L_k$ be
lengths of the intervals $I_1,I_2,\dots,I_k$.  By Lemma \ref{comm1}, one
can find positive numbers $l_1,l_2,\dots,l_m$ linearly independent over
$\bQ$ such that each $L_i$ is a linear combination of $l_1,l_2,\dots,l_m$
with nonnegative integer coefficients.  Then each $I_i$ can be partitioned
into smaller intervals with lengths in the set $\cL=\{l_1,l_2,\dots,l_m\}$.
Clearly, the restriction of $f$ to any of the smaller intervals is a
translation, hence Lemma \ref{elem1} applies here.  We obtain that $f$ can
be represented as a product of restricted rotations, $f=f_1f_2\dots f_n$,
such that the type $(a,b)$ of any factor satisfies $a,b\in\cL$.  For any
$i,j\in\{1,2,\dots,m\}$ let $s_{ij}$ denote the number of factors of type
$(l_i,l_j)$ in this product.  Then
$$
\SAF(f)=\sum\nolimits_{i=1}^n \SAF(f_n)=\sum\nolimits_{i=1}^m
\sum\nolimits_{j=1}^m s_{ij}(l_i\wedge l_j)
=\sum\nolimits_{1\le i<j\le m}(s_{ij}-s_{ji})(l_i\wedge l_j).
$$
Since the numbers $l_1,\dots,l_m$ are linearly independent over $\bQ$, it
follows from Lemma \ref{inv1} that the wedge products $l_i\wedge l_j$,
$1\le i<j\le m$, are linearly independent in $\bR\wedge_{\bQ}\bR$.
Therefore $\SAF(f)=0$ only if $s_{ij}=s_{ji}$ for all $i,j$, $i<j$.  Then
$s_{ij}=s_{ji}$ for all $i,j\in\{1,2,\dots,m\}$, which means that the
product $f_1f_2\dots f_n$ is balanced.
\end{proof}

The next lemma is an extension of Lemma \ref{elem6} that will be used in
the proofs of Lemmas \ref{comm4} and \ref{comm5} below.

\begin{lemma}\label{comm3}
Given $a,b,\eps>0$, there exist $a_0,b_0>0$, $a_0+b_0<\eps$, such that any
restricted rotation $f$ of type $(a,b)$ can be represented as $f=hg$, where
$h$ is a restricted rotation of type $(a_0,b_0)$ and $g$ is a product of
interval swap maps.
\end{lemma}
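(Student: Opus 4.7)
The plan is to iterate Lemma \ref{elem6} along the Euclidean-style subtraction sequence furnished by Lemma \ref{inv4}. Apply Lemma \ref{inv4} to $(a,b)$ with the given $\eps$ to obtain pairs $(a_1,b_1)=(a,b),(a_2,b_2),\dots,(a_n,b_n)$ with $a_n+b_n<\eps$ or $a_n=b_n$. Note that this sequence depends only on $(a,b,\eps)$, not on any particular interval exchange transformation, which is crucial because the statement demands that $(a_0,b_0)$ be chosen before $f$.

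The main inductive step is: given a restricted rotation $f'$ of type $(a_i,b_i)$ with $a_i\ne b_i$, produce a factorization $f'=h'g'$ where $h'$ is a restricted rotation of type $(a_{i+1},b_{i+1})$ and $g'$ is an interval swap map. If $a_i>b_i$, Lemma \ref{elem6} supplies swap maps $g_1,g_2$ with $f'g_2=h'$ a restricted rotation of type $(a_i-b_i,b_i)=(a_{i+1},b_{i+1})$; since $g_2$ is an involution, $f'=h'g_2$. If $a_i<b_i$, apply Lemma \ref{elem6} to $(f')^{-1}$, which is a restricted rotation of type $(b_i,a_i)$ with first entry larger: it yields swap maps $g_1,g_2$ with $g_1(f')^{-1}=h''$, a restricted rotation of type $(b_i-a_i,a_i)$, so $f'=(h'')^{-1}g_1$, and $(h'')^{-1}$ has type $(a_i,b_i-a_i)=(a_{i+1},b_{i+1})$. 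Iterating this reduction starting from $f$ yields $f=h_n g^{(n-1)}g^{(n-2)}\cdots g^{(1)}$, where $h_n$ is a restricted rotation of type $(a_n,b_n)$ and each $g^{(i)}$ is an interval swap map.

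Now split on the Lemma \ref{inv4} dichotomy. If $a_n+b_n<\eps$, set $(a_0,b_0)=(a_n,b_n)$, $h=h_n$, and $g=g^{(n-1)}\cdots g^{(1)}$; this finishes the argument. Otherwise the dichotomy forces $a_n=b_n$, and then $h_n$ is itself an interval swap map (a restricted rotation of type $(a_n,a_n)$). In this exceptional case, choose once and for all any $a_0=b_0>0$ with $2a_0<\eps$ and small enough to fit inside $I$, and fix any restricted rotation $h$ of type $(a_0,b_0)$; since $a_0=b_0$, this $h$ is also an interval swap map. Define $g=h^{-1}f$; substituting the expression for $f$ gives $g=h\cdot h_n\cdot g^{(n-1)}\cdots g^{(1)}$, a product of interval swap maps, and so $f=hg$ with $h$ of type $(a_0,b_0)$ as required. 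Note that $h$ is chosen independently of $f$, so $(a_0,b_0)$ serves all restricted rotations of type $(a,b)$ simultaneously.

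The only real obstacle is bookkeeping: we must both respect the quantifier order (choose $(a_0,b_0)$ before $f$) and reconcile the two branches of Lemma \ref{inv4}. Both are resolved by letting the Euclidean sequence of Lemma \ref{inv4} determine $(a_0,b_0)$ in the generic case, and absorbing the terminal type $(a_n,a_n)$ into the swap-map tail by means of a single pre-chosen small symmetric rotation $h$ in the exceptional case.
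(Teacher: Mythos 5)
Your proposal is correct and follows essentially the same route as the paper: iterate Lemma \ref{elem6} (applied to $f$ or to $f^{-1}$ depending on which of $a_i,b_i$ is larger) along the subtraction sequence from Lemma \ref{inv4}, then handle the terminal case $a_n=b_n$ by absorbing the resulting swap map into $g$ via a pre-chosen rotation of type $(a_0,a_0)$ with $2a_0<\eps$. Your explicit remark that the sequence, and hence $(a_0,b_0)$, depends only on $(a,b,\eps)$ and not on $f$ is a point the paper leaves implicit.
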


\begin{proof}
Consider an arbitrary restricted rotation $f$ of type $(a',b')$, where
$a'\ne b'$.  If $a'>b'$ then Lemma \ref{elem6} implies that $f=hg$, where
$h$ is a restricted rotation of type $(a'-b',b')$ and $g$ is an interval
swap map.  In the case $a'<b'$, we observe that the inverse map $f^{-1}$ is
a restricted rotation of type $(b',a')$.  The same Lemma \ref{elem6}
implies that $f^{-1}=\tilde g\tilde h$, where $\tilde h$ is a restricted
rotation of type $(b'-a',a')$ and $\tilde g$ is an interval swap map.  Note
that $f=\tilde h^{-1}\tilde g^{-1}=\tilde h^{-1}\tilde g$ and $\tilde
h^{-1}$ is a restricted rotation of type $(a',b'-a')$.

By Lemma \ref{inv4}, there exist pairs of positive numbers
$(a_1,b_1),\dots,(a_n,b_n)$ such that
\begin{itemize}
\item
$(a_1,b_1)=(a,b)$,
\item
$(a_{i+1},b_{i+1})=(a_i-b_i,b_i)$ or $(a_{i+1},b_{i+1})=(a_i,b_i-a_i)$ for
$1\le i\le n-1$,
\item
$a_n+b_n<\eps$ or $a_n=b_n$.
\end{itemize}
Clearly, $a_i\ne b_i$ for $1\le i<n$.  By induction, it follows from the
above that there exist interval exchange transformations
$f_1=f,f_2,\dots,f_n$ and $g_2,\dots,g_n$ such that $f_i$ is a restricted
rotation of type $(a_i,b_i)$, $g_i$ is an interval swap map, and
$f_{i-1}=f_ig_i$ for $2\le i\le n$.  We have $f=f_ng$, where $g=g_ng_{n-1}
\dots g_2$ is a product of interval swap maps.  If $a_n+b_n<\eps$ then we
are done.  Otherwise $a_n=b_n$ so that $f_n$ itself is an interval swap
map, hence $f$ is a product of interval swap maps.  In this case, take an
arbitrary restricted rotation $h$ of type $(a_0,b_0)$, where
$a_0=b_0<\eps/2$.  Since $h$ is also an interval swap map, we obtain
$f=h(hf)$, where $hf$ is a product of interval swap maps.
\end{proof}

\begin{lemma}\label{comm4}
Let $f_1$ and $f_2$ be restricted rotations of the same type.  Then
$f_1^{-1}f_2$ is a product of interval swap maps.
\end{lemma}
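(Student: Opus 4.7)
The idea is to reduce to the disjoint-supports case handled by Lemma~\ref{elem5}. Since the supports of $f_1$ and $f_2$ may overlap (indeed they may even coincide), I cannot apply Lemma~\ref{elem5} directly. Instead, I will first use Lemma~\ref{comm3} to peel off from each $f_i$ a restricted rotation of a very small common type, with the residue being a product of interval swap maps. If the small type is chosen small enough, there is ``room'' inside $I$ to insert an auxiliary rotation of the same small type with support disjoint from both small pieces, and this auxiliary rotation lets me break $h_1^{-1}h_2$ into two pieces each amenable to Lemma~\ref{elem5}.

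Let $l$ denote the length of $I$ and fix any $\eps$ with $0<\eps<l/5$. Applying Lemma~\ref{comm3} to the common type $(a,b)$ of $f_1$ and $f_2$ and to this $\eps$, we obtain $a_0,b_0>0$ with $a_0+b_0<\eps$ such that for $i=1,2$ we may write $f_i=h_ig_i$, where each $h_i$ is a restricted rotation of type $(a_0,b_0)$ and each $g_i$ is a product of interval swap maps. Then
\[
f_1^{-1}f_2=g_1^{-1}h_1^{-1}h_2g_2.
\]
Interval swap maps are involutions, so $g_1^{-1}$ is also a product of interval swap maps; hence it is enough to prove that $h_1^{-1}h_2$ is a product of interval swap maps.

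The supports of $h_1$ and $h_2$ are intervals in $I$, each of length $a_0+b_0<l/5$, so their union has measure less than $2l/5$ and the complement in $I$ has measure greater than $3l/5$ and is a union of at most three half-closed intervals. One such component therefore has length more than $l/5>a_0+b_0$, and inside it we may choose a half-closed interval $J\subset I$ of length $a_0+b_0$. Let $f_0$ be the restricted rotation of type $(a_0,b_0)$ with support $J$.

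Now I write
\[
h_1^{-1}h_2=(h_1^{-1}f_0)\,(f_0^{-1}h_2).
\]
By construction, the support of $f_0$ is disjoint from the supports of both $h_1$ and $h_2$, so Lemma~\ref{elem5} shows that $f_0^{-1}h_1$ and $f_0^{-1}h_2$ are each products of three interval swap maps; inverting the first (and using that interval swap maps are involutions) shows that $h_1^{-1}f_0$ is also a product of interval swap maps. Therefore $h_1^{-1}h_2$ is, and hence so is $f_1^{-1}f_2$. The only real technical point is the choice of $\eps$ small enough to guarantee room for the auxiliary rotation $f_0$; the rest is bookkeeping from Lemmas~\ref{elem5} and~\ref{comm3}.
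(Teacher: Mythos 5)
Your proof is correct and follows essentially the same route as the paper: reduce to a small common type via Lemma~\ref{comm3}, insert an auxiliary restricted rotation $f_0$ of that type with support disjoint from both (using the $l/5$ room argument), and apply Lemma~\ref{elem5} twice to the factors $h_1^{-1}f_0$ and $f_0^{-1}h_2$. The only difference is organizational — the paper states the disjoint-room case as a separate intermediate step — so there is nothing to add.
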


\begin{proof}
The lemma has already been proved in one case.  If the supports of $f_1$
and $f_2$ do not overlap then $f_1^{-1}f_2$ is the product of three
interval swap maps due to Lemma \ref{elem5}.  We are going to reduce the
general case to this particular one.

Let $(a,b)$ be the type of the restricted rotations $f_1$ and $f_2$.  First
we assume there exists an interval $I_0\subset I$ of length $a+b$ that does
not overlap with supports of $f_1$ and $f_2$.  Let $f_0$ denote a unique
restricted rotation of type $(a,b)$ with support $I_0$.  By Lemma
\ref{elem5}, both $f_1^{-1}f_0$ and $f_0^{-1}f_2$ are products of three
interval swap maps.  Hence $f_1^{-1}f_2=(f_1^{-1}f_0)(f_0^{-1}f_2)$ is the
product of six interval swap maps.

The above assumption always holds in the case when $a+b\le l/5$, where $l$
is the length of $I$.  Indeed, let us divide the interval $I$ into $5$
pieces of length $l/5$.  Then the support of $f_1$, which is an interval of
length $a+b$, overlaps with at most two pieces.  The same is true for the
support of $f_2$.  Therefore we have at least one piece with interior
disjoint from both supports.  This piece clearly contains an interval of
length $a+b$.

Now consider the general case.  It follows from Lemma \ref{comm3} that
$f_1=h_1g_1$ and $f_2=h_2g_2$, where $g_1,g_2$ are products of interval
swap maps while $h_1,h_2$ are restricted rotations of the same type
$(a_0,b_0)$ such that $a_0+b_0<l/5$.  Note that $g_1^{-1}$ and $g_2^{-1}$
are also products of interval swap maps.  By the above $h_1^{-1}h_2$ is the
product of six interval swap maps.  Then $f_1^{-1}f_2=
g_1^{-1}(h_1^{-1}h_2)g_2$ is a product of interval swap maps as well.
\end{proof}

\begin{lemma}\label{comm5}
Let $f$ be a restricted rotation and $g$ be an arbitrary interval exchange
transformation.  Then the commutator $f^{-1}g^{-1}fg$ is a product of
interval swap maps.
\end{lemma}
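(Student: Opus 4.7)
The plan is to introduce the subgroup $H\subset\cG_I$ generated by interval swap maps, show that $H$ is normal in $\cG_I$, and then show that the quotient $\cG_I/H$ is abelian.  Since $f^{-1}g^{-1}fg\in H$ is exactly the statement that the commutator is a product of interval swap maps, the lemma follows immediately (and in fact will hold for arbitrary $f$, not just a restricted rotation).

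Normality of $H$ is easy.  For any interval swap map $s$ and any $h\in\cG_I$, the conjugate $h^{-1}sh$ has order $2$, so by Lemma \ref{elem2} it lies in $H$.  Writing a general element of $H$ as a product of interval swap maps and conjugating term by term yields $h^{-1}Hh\subseteq H$.

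For commutativity of $\cG_I/H$ the argument has three ingredients.  By Lemma \ref{elem1}, $\cG_I$ is generated by restricted rotations, so $\cG_I/H$ is generated by their cosets.  By Lemma \ref{comm4}, any two restricted rotations of the same type differ by an element of $H$, so the coset of a restricted rotation depends only on its type $(a,b)$; call it $\bar r_{a,b}$.  By Lemma \ref{comm3}, for any type $(a,b)$ and any $\eps>0$, one has $\bar r_{a,b}=\bar r_{a_0,b_0}$ for some $(a_0,b_0)$ with $a_0+b_0<\eps$.  Taking $\eps$ smaller than one third of the length of $I$ and combining this with Lemma \ref{comm4} (which lets the representative sit with any admissible support position), the coset $\bar r_{a,b}$ admits a representative whose support can be placed inside any prescribed third of $I$.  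Given two types $(a,b)$ and $(c,d)$, choose representatives of $\bar r_{a,b}$ and $\bar r_{c,d}$ with supports in the left and right thirds of $I$, respectively; these have disjoint supports, hence commute in $\cG_I$, so the corresponding cosets commute.  All generators of $\cG_I/H$ pairwise commute, so the quotient is abelian.

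The main obstacle is the commutativity step.  The key device is Lemma \ref{comm3}, which lets us shrink any restricted rotation, modulo $H$, to one of arbitrarily small type; combined with Lemma \ref{comm4} this permits free repositioning of a representative of a given coset, so that representatives of two cosets can be forced to have disjoint supports and hence commute.
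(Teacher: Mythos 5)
Your proposal is correct, and it takes a genuinely different route from the paper. The paper proves the lemma by a local reduction: it first treats the case where $g^{-1}$ restricted to the support $J$ of $f$ is a translation (then $g^{-1}fg$ is again a restricted rotation of the same type and Lemma \ref{comm4} applies directly), and in the general case it uses Lemma \ref{comm3} to replace $f$, modulo swap maps, by a restricted rotation small enough that its support fits inside an interval on which $g^{-1}$ is a translation; the leftover conjugated swap maps are absorbed exactly as you do, via the order-$2$ observation and Lemma \ref{elem2}. You instead prove the stronger global statement that the subgroup $H$ generated by interval swap maps is normal with abelian quotient, so that \emph{every} commutator lies in $H$. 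The key inputs are the same (Lemmas \ref{elem1}, \ref{elem2}, \ref{comm3}, \ref{comm4}), and both arguments rest on the same ``shrink and relocate'' device, but your organization replaces the conjugation computation by the observation that restricted rotations with disjoint supports commute, which makes the generators of $\cG_I/H$ commute. What your approach buys is the inclusion of the full commutator group in $H$ without any appeal to the SAF invariant (in the paper this inclusion only falls out of Theorem \ref{main1}, whose proof routes through $\SAF$); what it does not replace is the harder inclusion of the zero-$\SAF$ subgroup into $H$, for which Lemmas \ref{comm2} and \ref{comm6} are still needed. There is no circularity: none of the lemmas you invoke depends on Lemma \ref{comm5}.
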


\begin{proof}
Let $(a,b)$ be the type of the restricted rotation $f$ and $J$ be the
support of $f$.  First assume that the restriction of the transformation
$g^{-1}$ to $J$ is a translation.  Then $g^{-1}fg$ is also a restricted
rotation of type $(a,b)$, with support $g^{-1}(J)$.  Therefore
$f^{-1}g^{-1}fg$ is a product of interval swap maps due to Lemma
\ref{comm4}.

In the general case, we choose an interval $I_0\subset I$ such that
$g^{-1}$ is a translation when restricted to $I_0$.  Let $\eps$ denote the
length of $I_0$.  According to Lemma \ref{comm3}, we have $f=f_0g_0$, where
$g_0$ is a product of interval swap maps and $f_0$ is a restricted rotation
of some type $(a_0,b_0)$ such that $a_0+b_0<\eps$.  Obviously, $g_0^{-1}$
is also a product of interval swap maps.  Since $a_0+b_0<\eps$, there
exists a restricted rotation $f_1$ of type $(a_0,b_0)$ with support
contained in $I_0$.  By the above the commutator $f_1^{-1}g^{-1}f_1g$ is a
product of swap maps.  By Lemma \ref{comm4}, $f_0^{-1}f_1$ and
$f_1^{-1}f_0$ are also products of interval swap maps.  Note that
$$
f^{-1}g^{-1}fg=g_0^{-1}f_0^{-1}g^{-1}f_0g_0g
=g_0^{-1}(f_0^{-1}f_1)(f_1^{-1}g^{-1}f_1g)g^{-1}(f_1^{-1}f_0)g_0g.
$$
Therefore $f^{-1}g^{-1}fg=g_1g^{-1}g_2g$, where $g_1$ and $g_2$ are
products of interval swap maps.  Consider an arbitrary factorization
$g_2=h_1h_2\dots h_n$ such that each $h_i$ is an interval swap map.  Then
$g^{-1}g_2g=(g^{-1}h_1g)(g^{-1}h_2g)\dots(g^{-1}h_ng)$.  Clearly, each
$g^{-1}h_ig$ is an interval exchange transformation of order $2$ and hence
a product of interval swap maps due to Lemma \ref{elem2}.  It follows that
$f^{-1}g^{-1}fg$ can also be represented as a product of interval swap
maps.
\end{proof}

\begin{lemma}\label{comm6}
Any balanced product of restricted rotations is also a product of interval
swap maps.
\end{lemma}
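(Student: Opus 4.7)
The plan is to work modulo the subgroup $S\subset\cG_I$ consisting of all finite products of interval swap maps, and to show that the image of any balanced product in the quotient $\cG_I/S$ is trivial. The first step is to verify that $S$ really is a normal subgroup. Closure under composition is clear, closure under inverses follows because each interval swap map is an involution, and the identity belongs to $S$ as $h^2$ for any swap $h$. For normality, observe that the conjugate of any interval swap map by an arbitrary $g\in\cG_I$ is again an involution and hence, by Lemma \ref{elem2}, a product of interval swap maps; applying this factor by factor shows $g^{-1}sg\in S$ for every $s\in S$.

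Next I would extract three structural facts about the quotient $\cG_I/S$ from the preceding lemmas. First, by Lemma \ref{comm5}, for any restricted rotation $f$ and any $g\in\cG_I$ the commutator $f^{-1}g^{-1}fg$ lies in $S$, so the class of every restricted rotation is central in $\cG_I/S$. Second, by Lemma \ref{comm4}, any two restricted rotations of the same type are congruent modulo $S$, so the class $[h]$ of a restricted rotation $h$ depends only on its type. Third, since the inverse of a restricted rotation of type $(a,b)$ is one of type $(b,a)$, the class of a type-$(b,a)$ rotation is the inverse of the class of a type-$(a,b)$ rotation in $\cG_I/S$.

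Now take a balanced product $f=f_1f_2\cdots f_n$ of restricted rotations. Because the classes $[f_i]$ are central, I can reorder the product freely in $\cG_I/S$ and group factors by type. For any pair of opposite types $(a,b)$ and $(b,a)$ with $a\ne b$, let $n_{a,b}$ and $n_{b,a}$ denote the multiplicities; the balanced hypothesis gives $n_{a,b}=n_{b,a}$, so the combined contribution is $[h]^{n_{a,b}}[h]^{-n_{a,b}}=[e]$, where $h$ is any chosen restricted rotation of type $(a,b)$. For types $(a,a)$ there is nothing to do, since a restricted rotation of type $(a,a)$ is already an interval swap map and so belongs to $S$. Thus $[f]=[e]$, i.e.\ $f\in S$.

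The main obstacle I anticipate is not logical but notational: one must be careful that Lemmas \ref{comm4} and \ref{comm5} are strong enough to give centrality of all restricted-rotation classes in $\cG_I/S$, including centrality with respect to elements that are themselves not restricted rotations, so that the reordering of the $f_i$ is legitimate. Once this point is settled, the remainder of the argument is a bookkeeping computation with the balanced multiplicities.
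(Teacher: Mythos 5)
Your proof is correct, and it rests on exactly the same two ingredients as the paper's proof---Lemma \ref{comm4} (restricted rotations of equal type agree modulo swap maps) and Lemma \ref{comm5} (restricted rotations commute with everything modulo swap maps)---but it organizes them differently. The paper runs a strong induction on the number of factors, explicitly rewriting $f=f_1g_1f_kg_2$ as $(f_1f_k)(f_k^{-1}g_1f_kg_1^{-1})(g_1g_2)$ and recursing on the shorter balanced product $g_1g_2$; you instead pass to the quotient by the subgroup $S$ generated by swap maps, check that $S$ is normal (via the conjugate-of-an-involution trick from Lemma \ref{elem2}, which the paper itself uses inside the proof of Lemma \ref{comm5}), and observe that every restricted-rotation class is central in $\cG_I/S$ and depends only on its type, after which the balanced condition kills the product by pure bookkeeping. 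Your version buys a cleaner, induction-free argument and makes transparent \emph{why} the balanced condition is exactly what is needed; the paper's version avoids having to introduce the quotient group and verify normality. The worry you raise at the end is unfounded: Lemma \ref{comm5} is stated for an arbitrary interval exchange transformation $g$, so it does give full centrality of restricted-rotation classes in $\cG_I/S$, and the reordering is legitimate.
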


\begin{proof}
The proof is by strong induction on the number $n$ of factors in a balanced
product.  Let $f=f_1f_2\dots f_n$ be a balanced product of $n$ restricted
rotations and assume that the lemma holds for any balanced product of less
than $n$ factors.  Let $(a,b)$ be the type of $f_1$.  First consider the
case $a=b$.  In this case, $f_1$ is an interval swap map.  If $n=1$ then we
are done.  Otherwise $f=f_1g$, where $g=f_2\dots f_n$ is a balanced product
of $n-1$ restricted rotations.  By the inductive assumption, $g$ is a
product of interval swap maps, and so is $f$.

Now consider the case $a\ne b$.  In this case, there is also a factor $f_k$
of type $(b,a)$.  Let $g_1$ be the identity if $k=2$ and $g_1=f_2\dots
f_{k-1}$ otherwise.  Let $g_2$ be the identity if $k=n$ and
$g_2=f_{k+1}\dots f_n$ otherwise.  We have
$$
f=f_1g_1f_kg_2=(f_1f_k)(f_k^{-1}g_1f_kg_1^{-1})(g_1g_2).
$$
Since $f_1^{-1}$ is a restricted rotation of type $(b,a)$, it follows from
Lemma \ref{comm4} that $f_1f_k=(f_1^{-1})^{-1}f_k$ is a product of interval
swap maps.  Since $f_k^{-1}g_1f_kg_1^{-1}$ is the commutator of the
restricted rotation $f_k$ and the interval exchange transformation
$g_1^{-1}$, it is a product of interval swap maps due to Lemma \ref{comm5}.
If $n=2$ then $g_1g_2$ is the identity and we are done.  Otherwise we
observe that $g_1g_2$ is a balanced product of $n-2$ restricted rotations.
By the inductive assumption, $g_1g_2$ is a product of interval swap maps,
and so is $f$.
\end{proof}

\begin{proofof}{Theorem \ref{main1}}
Let $\cG=\cG_I$ be the group of interval exchange transformations of an
arbitrary interval $I=[p,q)$.  Let $\cG_0$ be the set of all elements in
$\cG$ with zero SAF invariant.  $\cG_0$ is a normal subgroup of $\cG$ as it
is the kernel of the homomorphism $\SAF$ (see Lemma \ref{inv3}).  Let
$\cG_1$ denote the commutator group of $\cG$, i.e., the subgroup of $\cG$
generated by commutators $f^{-1}g^{-1}fg$, where $f,g\in\cG$.  Also, let
$\cG_2$ be the subgroup of $\cG$ generated by all elements of order $2$ and
$\cG_3$ be the subgroup generated by all elements of finite order.  We have
to prove that the groups $\cG_0$, $\cG_1$, $\cG_2$, and $\cG_3$ coincide.

Since the scissors congruence invariant $\SAF$ is a homomorphism of $\cG$
to an abelian group, it vanishes on every commutator.  It follows that
$\cG_1\subset\cG_0$.  Lemmas \ref{comm2} and \ref{comm6} imply that any
element of $\cG_0$ is a product of interval swap maps, which are elements
of order $2$.  Therefore $\cG_0\subset\cG_2$.  The inclusion $\cG_2\subset
\cG_3$ is trivial.  By Lemma \ref{elem2}, any element of $\cG_3$ is a
product of interval swap maps, which are commutators due to Lemma
\ref{elem3}.  Hence $\cG_3\subset\cG_1$.  We conclude that $\cG_0=\cG_1
=\cG_2=\cG_3$.
\end{proofof}

\begin{proofof}{Theorem \ref{main2}}
According to Lemma \ref{inv3}, the SAF invariant $\SAF$, regarded as a
function on the group $\cG_I$ of interval exchange transformations of an
interval $I$, is a homomorphism to $\bR\otimes_{\bQ}\bR$.  Therefore the
quotient of $\cG_I$ by the kernel of this homomorphism is isomorphic to its
image.  By Lemma \ref{inv6}, the image of the homomorphism is
$\bR\wedge_{\bQ}\bR$.  By Theorem \ref{main1}, the kernel is the commutator
group of $\cG_I$.
\end{proofof}

\section{Simplicity}\label{simp}

Let $\cG=\cG_I$ be the group of interval exchange transformations of an
arbitrary interval $I=[p,q)$.  In this section we show that the commutator
group $[\cG,\cG]$ of $\cG$ is simple.

\begin{lemma}\label{simp1}
For any $\eps>0$ the commutator group of $\cG$ is generated by interval
swap maps of types less than $\eps$.
\end{lemma}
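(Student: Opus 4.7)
The plan is to show that every interval swap map decomposes into a product of interval swap maps of arbitrarily small type, and then invoke Theorem \ref{main1}, which already identifies the commutator group with the group generated by interval swap maps of all types.

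First I would fix an interval swap map $f$ of type $a$, say interchanging the intervals $[x,x+a)$ and $[y,y+a)$, and pick a positive integer $n$ with $a/n<\eps$. For $0\le i<n$, let $h_i$ be the interval swap map of type $a/n$ that interchanges $[x+ia/n,\,x+(i+1)a/n)$ with $[y+ia/n,\,y+(i+1)a/n)$. These $n$ maps have pairwise nonoverlapping supports, so they commute, and their product acts exactly as $f$ on each subinterval and as the identity elsewhere. Hence $f=h_0h_1\dots h_{n-1}$ is a product of interval swap maps of type $a/n<\eps$.

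Now I would combine this with Theorem \ref{main1}. By that theorem, the commutator group $[\cG,\cG]$ coincides with $\cG_2$, the subgroup of $\cG$ generated by all elements of order $2$; in particular every element of $[\cG,\cG]$ is a product of interval swap maps. Applying the decomposition of the previous paragraph to each factor, every element of $[\cG,\cG]$ is a product of interval swap maps of type less than $\eps$. Conversely, each interval swap map of type less than $\eps$ lies in $[\cG,\cG]$ (this is also immediate from Theorem \ref{main1}, or directly from Lemma \ref{elem3}, which presents any interval swap map as a commutator). Therefore $[\cG,\cG]$ is generated by interval swap maps of types less than $\eps$.

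There is no real obstacle here: the whole content is the elementary observation that a swap of two long intervals factors as a product of commuting swaps of short subintervals. Everything else is a direct appeal to Theorem \ref{main1}.
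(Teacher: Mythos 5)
Your proof is correct and follows essentially the same route as the paper: decompose a swap map of type $a$ into $n$ commuting swap maps of type $a/n<\eps$ with disjoint supports, then identify the subgroup they generate with $[\cG,\cG]$ via Theorem \ref{main1}. The one elision is the phrase ``in particular every element of $[\cG,\cG]$ is a product of interval swap maps'': being generated by order-$2$ elements does not by itself give this, since an element of order $2$ need not be an interval swap map; you need Lemma \ref{elem2} (every finite-order transformation is a product of interval swap maps), which is exactly the citation the paper inserts at this point.
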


\begin{proof}
Let $f$ be an arbitrary interval swap map in $\cG$.  Denote by $a$ the type
of $f$.  Let $[x,x+a)$ and $[y,y+a)$ be the nonoverlapping intervals
interchanged by $f$.  We choose a sufficiently large positive integer $N$
such that $a/N<\eps$.  For any $i\in\{1,2,\dots,N\}$ let $f_i$ denote the
interval exchange transformation that interchanges intervals
$[x+(i-1)a/N,x+ia/N)$ and $[y+(i-1)a/N,y+ia/N)$ by translation while fixing
the rest of the interval $I$.  It is easy to see that $f=f_1f_2\dots f_N$.
Note that each $f_i$ is an interval swap map of type $a/N<\eps$.

Let $H_\eps$ be the subgroup of $\cG$ generated by all interval swap maps
of types less than $\eps$.  By the above the group $H_\eps$ contains all
interval swap maps in $\cG$.  In view of Lemma \ref{elem2}, $H_\eps$
coincides with the subgroup of $\cG$ generated by all elements of finite
order.  By Theorem \ref{main1}, $H_\eps=[\cG,\cG]$.
\end{proof}

\begin{lemma}\label{simp2}
There exists $\eps>0$ such that any two interval swap maps in $\cG$ of the
same type $a<\eps$ are conjugated in $[\cG,\cG]$.
\end{lemma}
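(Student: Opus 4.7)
My plan is to apply Lemma \ref{elem4} twice, routing $f_1$ to $f_2$ through an auxiliary interval swap map $f_3$ of type $a$ whose support is chosen disjoint from the supports of both $f_1$ and $f_2$. Lemma \ref{elem4} then yields involutions $g_1, g_2 \in \cG$ with $f_3 = g_1 f_1 g_1$ and $f_2 = g_2 f_3 g_2$, so that $f_2 = (g_2 g_1) f_1 (g_2 g_1)^{-1}$. By Theorem \ref{main1}, every element of order $2$ lies in $[\cG, \cG]$; hence $g_1$ and $g_2$ do, and therefore so does the conjugator $g_2 g_1$. This is the group-theoretic core of the argument.

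What remains is a geometric packing question: how small must $a$ be to guarantee that such an $f_3$ exists? I would take $\eps = l/10$, where $l = q - p$ is the length of $I$. The set $S$ formed by the supports of $f_1$ and $f_2$ is a union of at most four half-closed intervals of length $a$, so $I \setminus S$ is a disjoint union of at most five half-closed intervals of total length at least $l - 4a \geq 6a$. If at least two of these complementary pieces have length $\geq a$, I carve a half-closed subinterval of length $a$ out of each and use them as the two components of the support of $f_3$. Otherwise, at most one piece has length $\geq a$; the other (at most four) pieces then contribute strictly less than $4a$, forcing the single long piece to have length at least $l - 8a \geq 2a$, which can be split into two disjoint half-closed intervals of length $a$ forming the support of $f_3$. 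In either case, the desired $f_3$ is placed disjointly from $S$.

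The only genuine obstacle is this packing estimate, which needs a small case split rather than being completely routine; the group-theoretic assembly is a one-line diagram chase once the right $f_3$ is in hand. The precise threshold $\eps = l/10$ is not important—the lemma only asserts existence of some positive $\eps$—so there is no reason to optimize it further.
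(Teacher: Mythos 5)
Your proof is correct and follows essentially the same route as the paper: the paper also takes $\eps=l/10$, routes $f_1$ to $f_2$ through an auxiliary swap map with disjoint support via two applications of Lemma \ref{elem4}, and invokes Theorem \ref{main1} to place the order-$2$ conjugators in $[\cG,\cG]$. The only difference is cosmetic: the paper finds the auxiliary support by cutting $I$ into ten equal pieces and counting which pieces the supports can meet, whereas you measure the complement of the two supports directly; both packing arguments work.
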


\begin{proof}
Let $l$ be the length of the interval $I$.  Consider arbitrary interval
swap maps $f_1,f_2\in\cG$ of the same type $a<l/10$.  Let us divide the
interval $I$ into $10$ pieces of length $l/10$.  The support of $f_1$ is
the union of two intervals of length $a$.  Since $a<l/10$, each interval of
length $a$ overlaps with at most two of the ten pieces.  Hence the support
of $f_1$ overlaps with at most $4$ pieces.  The same is true for the
support of $f_2$.  Therefore we have at least two pieces with interior
disjoint from both supports.  Clearly, one can find two nonoverlapping
intervals $I_1$ and $I_2$ of length $a$ in these pieces.  Let $f_0$ be the
interval swap map of type $a$ that interchanges $I_1$ and $I_2$ by
translation and fixes the rest of $I$.  By construction, the support of
$f_0$ does not overlap with the supports of $f_1$ and $f_2$.  It follows
from Lemma \ref{elem4} that $f_1=g_1f_0g_1$ and $f_0=g_2f_2g_2$ for some
elements $g_1,g_2\in\cG$ of order $2$.  By Theorem \ref{main1}, the
commutator group $[\cG,\cG]$ contains all elements of order $2$ in $\cG$.
In particular, it contains $f_1$, $f_2$, $g_1$, and $g_2$.  Then
$g_2g_1\in[\cG,\cG]$ as well.  Since $f_1=g_1(g_2f_2g_2)g_1
=(g_2g_1)^{-1}f_2(g_2g_1)$, the elements $f_1$ and $f_2$ are conjugated in
$[\cG,\cG]$.
\end{proof}

\begin{proofof}{Theorem \ref{main3}}
Suppose $H$ is a nontrivial normal subgroup of $[\cG,\cG]$.  Let $f$ be an
arbitrary element of $H$ different from the identity.  By Lemma
\ref{elem7}, there exist $\eps_1>0$ and, for any $0<\eps<\eps_1$, interval
swap maps $g_1,g_2\in\cG$ such that $g_2f^{-1}g_1fg_1g_2$ is an interval
swap map of type $\eps$.  The interval swap maps $g_1$ and $g_2$ are
involutions.  They belong to $[\cG,\cG]$ due to Lemma \ref{elem3}.  Since
$H$ is a normal subgroup of $[\cG,\cG]$ that contains $f$, it also contains
the interval exchange transformations $f^{-1}$, $g_1^{-1}fg_1=g_1fg_1$,
$f^{-1}g_1fg_1$, and $g_2^{-1}(f^{-1}g_1fg_1)g_2=g_2f^{-1}g_1fg_1g_2$.  We
obtain that for any $0<\eps<\eps_1$ the subgroup $H$ contains an interval
swap map of type $\eps$.  By Lemma \ref{simp2}, there exists $\eps_2>0$
such that any two interval swap maps in $\cG$ of the same type
$\eps<\eps_2$ are conjugated in $[\cG,\cG]$.  It follows that all interval
swap maps in $\cG$ of types less than $\min(\eps_1,\eps_2)$ are also in
$H$.  According to Lemma \ref{simp1}, the commutator group of $\cG$ is
generated by these maps.  Hence $H=[\cG,\cG]$.  Thus the only nontrivial
normal subgroup of $[\cG,\cG]$ is $[\cG,\cG]$ itself.  That is, $[\cG,\cG]$
is a simple group.
\end{proofof}

\end{document}